\let\my@saved@original@eqref\eqref 
\renewcommand*{\eqref}[1]{
  \begingroup
    \let\normalfont\relax
    \my@saved@original@eqref{#1}
  \endgroup
}
\renewcommand*\env@matrix[1][r]{\hskip -\arraycolsep
  \let\@ifnextchar\new@ifnextchar
  \array{*\c@MaxMatrixCols #1}}
\newcommand{\e}{\mathrm{e}}
\newcommand{\pt}{\partial}
\DeclareMathOperator{\Div}{{div}}
\DeclareMathOperator{\Grad}{{grad}}
\DeclareMathOperator{\Curl}{{curl}}
\newcommand{\jump}[1]{[\hspace*{-2pt}[#1]\hspace*{-2pt}]}
\newcommand{\norm}[2]{\|{#1}\|_{#2}}
\newcommand{\N}{\mathbb{N}}
\renewcommand{\P}{\mathbb{P}}
\newcommand{\R}{\mathbb{R}}
\newcommand{\I}{\mathcal{I}}
\newcommand{\J}{\mathcal{J}}
\newcommand{\lrarrow}{\quad\Leftrightarrow\quad}
\newcommand{\qmbox}[1]{\quad\mbox{#1}\quad}
\newcommand{\pmtrx}[1]{\ensuremath{\begin{pmatrix}#1 \end{pmatrix}}}
\newcommand{\spmtrx}[1]{\ensuremath{\left(\begin{smallmatrix}#1 \end{smallmatrix}\right)}}
\definecolor{seb}{rgb}{0.9,0,0}
\newcommand{\vecsymb}[1]{\boldsymbol{#1}}
\newcommand{\vq}{\vecsymb{q}}
\newcommand{\vx}{\vecsymb{x}}
\newcommand{\dt}{\,\mathrm{d}t}
\title{Post-processing and improved error estimates of numerical methods for evolutionary systems}
\author{Sebastian Franz\footnote{
          Institute of Scientific Computing, Technische Universit\"at Dresden, Germany.
          \mbox{e-mail}: sebastian.franz@tu-dresden.de}
       }
\date{\today}
\pgfplotsset{compat=1.9}
\newcommand{\scp}[2][H]{\langle #2 \rangle_{#1}}
\newcommand{\scpr}[1]{\langle #1 \rangle_\rho}
\newcommand{\scprm}[1]{\langle #1 \rangle_{\rho,m}}
\newcommand{\K}{\mathcal{K}}
\newcommand{\PS}{\mathbbm{P}}
\newcommand{\U}[1][q]{\PS_{#1}^{disc}(V_k)}
\newcommand{\V}[1][q+1]{\PS_{#1}^{cont}(V_k)}
\newcommand{\tmi}{t_{m,i}}
\newcommand{\Qm}[2][m]{Q_{#1}\left[#2\right]}
\newcommand{\Qmr}[2][\rho]{Q_m\left[#2\right]_{#1}}
\newcommand{\id}[1]{\mathbbm{1}_{#1}}
\theoremstyle{plain}
\newtheorem{thm}{Theorem}[section]
\newtheorem{lem}[thm]{Lemma}
\newtheorem{rem}[thm]{Remark}
\begin{document}
  \maketitle
  \begin{abstract}
    We consider evolutionary systems, i.e. systems of linear partial differential equations arising from
    the mathematical physics. For these systems there exists a general solution theory in exponentially weighted spaces 
    which can be exploited in the analysis of numerical methods. The numerical method considered in this
    paper is a discontinuous Galerkin method in time combined with a conforming Galerkin method in space. Building on
    our recent paper \cite{FrTW16}, we improve some of the results, study the dependence of the numerical 
    solution on the weight-parameter, consider a reformulation and post-processing of its numerical solution.
    As a by-product we provide error estimates for the dG-C0 method.
    Numerical simulations support the theoretical findings.
  \end{abstract}

  \textit{AMS subject classification (2010):} 65J08, 65J10, 65M12, 65M60

  \textit{Key words:} evolutionary equations, 
     discontinuous Galerkin,
     post-processing, 
     dG-C0 method,
     space-time approach
     
  \section{Introduction}
  Let us consider the problem in the space-time cylinder $\R\times\Omega$, $\Omega\subset\R^d$, given by
  \begin{equation}\label{eq:problem}
    (\pt_t M_0+M_1+A)U=F
  \end{equation}
  where $M_0$, $M_1$ are bounded linear self-adjoint operators on a Hilbert space $H=H(\Omega)$ and $A$ 
  is a skew self-adjoint operator on $H$. Suppose further that there are constants $\rho_0$ and $\gamma>0$, such that
  \[
    \rho M_0+M_1\geq \gamma
  \]
  for all $\rho\geq\rho_0$. Then \cite[Solution Theory]{Picard} states, that the given problem has a 
  unique solution $U$ in
  \[
    H_\rho(\R;H):= 
      \left\{ 
          f:\R\to H\,:\, f \mbox{ meas.}, 
          \int_\R |f(t)|_H^2 \exp(-2\rho t) \dt<\infty
      \right\}
  \]
  for all $F\in H_\rho(\R;H)$. This is a Hilbert space and its natural inner 
  product is given for all $f,g\in H_\rho(\R;H)$ by 
  \[
      \scpr{f,g} :=
      \int_{\R}
      \scp{f(t),g(t)}
      \exp(-2\rho t) \dt.
  \]
  Then we can define the derivative in time $\partial_{t}$ as the closure 
  of the operator
  \[
    \partial_t :C_c^\infty(\R;H)\subseteq H_\rho(\R;H)\to H_\rho(\R;H): 
      \phi\mapsto \phi',
  \]
  where $C_c^\infty(\R;H)$ is the space of infinitely differentiable $H$-valued functions on $\R$ 
  with compact support. We denote the domain of $\partial^k_t$ by $H_\rho^k(\R;H)$ for $k\in \N$.
  The theory, including all the proofs, is also very well presented in the undergraduate-textbook \cite{McGPTW20}
  where we also find the following theorem.
  \begin{thm}[\!\!\mbox{\cite[Theorem 1.3.6]{McGPTW20}}]\label{thm:exist}
    Let $\rho_1,\rho_2\geq\rho_0$ and $F\in H_{\rho_1}(\R;H)\cap H_{\rho_2}(\R;H)$. 
    Denoting the solutions to \eqref{eq:problem} by $U_{\rho_1}\in H_{\rho_1}(\R;H)$ and $U_{\rho_2}\in H_{\rho_2}(\R;H)$, 
    it holds
    \[
      U_{\rho_1}=U_{\rho_2}.
    \]
    Thus, they are independent of $\rho\geq \rho_0$.
  \end{thm}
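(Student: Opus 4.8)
The plan is to exploit two structural features of the solution theory referenced above: the uniform boundedness of the solution operator and its causality. Write $S_\rho:=(\pt_t M_0+M_1+A)^{-1}$ for the solution operator realised in $H_\rho(\R;H)$; the coercivity $\rho M_0+M_1\geq\gamma$ yields the a priori bound $\norm{S_\rho}{}\leq\gamma^{-1}$, uniformly for $\rho\geq\rho_0$. Assume without loss of generality $\rho_0\leq\rho_1\leq\rho_2$. The elementary fact driving the argument is a one-sided comparison of the weighted spaces: if $g$ is supported in some $[a,\infty)$ and $g\in H_{\rho_1}(\R;H)$, then $g\in H_{\rho_2}(\R;H)$, since for $t\geq a$ the weight ratio $\exp(-2(\rho_2-\rho_1)t)$ is bounded on $[a,0]$ and at most $1$ on $[0,\infty)$, so $\norm{g}{\rho_2}\leq C_a\norm{g}{\rho_1}$. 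I will also use that on every bounded interval $[c,d]$ the norm $\norm{\cdot}{\rho}$ is equivalent to the plain $L^2([c,d];H)$ norm.

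First I would treat data $F$ with support bounded below, say in $[a,\infty)$. By causality of the solution theory the solution $U_{\rho_1}=S_{\rho_1}F$ is again supported in $[a,\infty)$, and combined with the comparison above this places $U_{\rho_1}\in H_{\rho_2}(\R;H)$. Since the time derivatives realised as closures in $H_{\rho_1}(\R;H)$ and $H_{\rho_2}(\R;H)$ agree on functions lying in both domains, $U_{\rho_1}$ also solves \eqref{eq:problem} in $H_{\rho_2}(\R;H)$, whence uniqueness there forces $U_{\rho_1}=S_{\rho_2}F=U_{\rho_2}$. For general $F\in H_{\rho_1}(\R;H)\cap H_{\rho_2}(\R;H)$ I would set $F_n:=\id{[-n,\infty)}F$. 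Each $F_n$ has support bounded below and lies in both spaces, and $F_n\to F$ in $H_{\rho_1}$ and in $H_{\rho_2}$ because the discarded tails $\int_{-\infty}^{-n}|F|_H^2\exp(-2\rho t)\dt$ vanish as $n\to\infty$. The special case gives $S_{\rho_1}F_n=S_{\rho_2}F_n=:U_n$, while boundedness of the solution operators yields $U_n\to U_{\rho_1}$ in $H_{\rho_1}$ and $U_n\to U_{\rho_2}$ in $H_{\rho_2}$. Restricting to an arbitrary bounded interval $[c,d]$ and using the norm equivalence there, both convergences hold in $L^2([c,d];H)$, so uniqueness of limits gives $U_{\rho_1}=U_{\rho_2}$ on $[c,d]$; as $[c,d]$ is arbitrary, $U_{\rho_1}=U_{\rho_2}$.

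The main obstacle is the causality step together with the consistency of the two time derivatives: a priori $\pt_t$ is defined as a closure separately in each $H_\rho(\R;H)$, and one must verify that a function lying in both domains is differentiated identically and that support-preservation holds uniformly in $\rho$. Both belong to the Picard solution theory and are most transparently obtained from the Fourier--Laplace representation, under which $\pt_t$ becomes multiplication by $\mathrm{i}\xi+\rho$ and $S_\rho$ corresponds to the operator-valued resolvent $z\mapsto(zM_0+M_1+A)^{-1}$, holomorphic and uniformly bounded on $\{\operatorname{Re}z\geq\rho_0\}$. In fact the whole statement can be read off from this representation: $\widehat{U}(z)=(zM_0+M_1+A)^{-1}\widehat{F}(z)$ is holomorphic on that half-plane, so shifting the inversion contour from $\operatorname{Re}z=\rho_1$ to $\operatorname{Re}z=\rho_2$ via Cauchy's theorem leaves the time-domain function unchanged; justifying this contour shift, namely the decay in $\operatorname{Im}z$ and an application of Fubini, is the only genuinely technical point.
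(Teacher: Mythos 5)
The paper offers no proof of this statement to compare against: Theorem~\ref{thm:exist} is imported verbatim from \cite[Theorem 1.3.6]{McGPTW20}, with the surrounding text explicitly deferring all proofs to that textbook and to \cite{Picard}. Measured against the argument in those references, your proposal is essentially a correct reconstruction of the standard proof, and your two routes are the two that appear in the literature: (a) causality plus truncation $F_n=\id{[-n,\infty)}F$ plus uniform boundedness $\norm{S_\rho}{}\leq\gamma^{-1}$ and local norm equivalence, and (b) the Fourier--Laplace picture, in which both solutions are boundary values of the single function $z\mapsto(zM_0+M_1+A)^{-1}\widehat F(z)$. You have also correctly located where the genuine work sits; let me sharpen three points. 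First, causality and the consistency of the closures $\pt_{t,\rho_1}$, $\pt_{t,\rho_2}$ on common elements are themselves theorems of the Picard theory, proved there \emph{independently} of $\rho$-invariance (via the Fourier--Laplace representation), so your use of them is not circular --- but a referee would ask you to cite or prove them, since quoting them is most of the theorem. Second, in route (b) note that $\widehat F$ is holomorphic only on the open strip $\rho_1<\operatorname{Re}z<\rho_2$ (with $L^2$ boundary values), not on all of $\{\operatorname{Re}z\geq\rho_0\}$; only the resolvent is holomorphic and bounded by $\gamma^{-1}$ on the half-plane. The textbook proof sidesteps the decay issues of a literal contour shift by invoking a Paley--Wiener-type characterization: $g\in H_{\rho_1}(\R;H)\cap H_{\rho_2}(\R;H)$ exactly when its transforms on the two lines are boundary values of one holomorphic function on the strip with uniformly bounded vertical $L^2$ norms; applying this to $(zM_0+M_1+A)^{-1}\widehat F(z)$ gives the result without any Fubini or pointwise decay argument, which is the cleaner way to discharge the step you flag as ``the only genuinely technical point.'' Third, a domain subtlety in route (a): $U_{\rho_1}$ solves \eqref{eq:problem} only in the sense of the \emph{closure} of $\pt_t M_0+M_1+A$ in $H_{\rho_1}(\R;H)$ (your $S_\rho$ should be the inverse of that closure), so transferring the equation to $H_{\rho_2}(\R;H)$ requires an approximating sequence converging in both graph topologies simultaneously; this is precisely what your ``derivatives agree on the intersection of the domains'' remark must deliver, and a fully detailed writeup would spend its effort there.
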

  
  If we restrict the problem in time to $[0,T]$ by setting $F(t)=0$ for $t<0$ and in space to a finite $\Omega\subset\R^d$,
  we can discretise it.
  In \cite{FrTW16} the above problems are discretised and analysed numerically using a discontinuous Galerkin method in 
  time, see also \cite{AM04,Thomee06}, and an $A$-conforming Galerkin FEM in space. It was shown there, that the error converges in a discretised, 
  exponentially weighted, $\rho$-dependent norm with optimal order in time and space.
  Later, in \cite{Fr19}, a continuous Galerkin method was used for the discretisation in time.

  In this paper we want to extend the above known results on the discretisation. 
  Before starting with the analysis, we give in Section~\ref{sec:basics} the formulation of the numerical method
  and its convergence results, quoting from \cite{FrTW16} and improving it in some aspects.
  In Section~\ref{sec:deprho} we investigate the dependence of the numerical solutions on $\rho$.
  We find that unlike the exact solution, the numerical solutions do depend on $\rho$.
  
  One difficulty in the direct implementation of the methods is the use of the weighted $L^2$-space.
  In Section~\ref{sec:reform} we reformulate the given problem in order to solve it in an unweighted space
  and discretise it. Conveniently, the convergence of the numerical method used in this setting follows 
  from the general setting and we compare the approximations of these two approaches. 
  
  Finally, in Section~\ref{sec:post} we apply a post-processing in time to the numerical solution of 
  the previous section. Its result is the unique solution of another method and we provide convergence error 
  estimates and with an improved order of convergence in time.

  \section{Numerical method and convergence results}\label{sec:basics}
  
  Let us discretise the time interval $[0,T]$ into a set of intervals $I_m=(t_{m-1},t_m]$
  using $0=t_0<t_1<\dots<t_M=T$ of width $\tau_m=t_m-t_{m-1}$, $m\in\{1,\dots,M\}$. For our estimates we will use the 
  maximum length $\tau=\max_{m}\tau_m$. In principle the choice of the mesh is arbitrary. For the 
  results in Section~\ref{sec:post} we will need the condition
  \[
    \frac{\tau_m}{\tau_{m-1}}\leq C,\,1<m\leq M,
  \]
  which limits the growth of the mesh.
  
  In space we also discretise $\Omega\subset \R^d$ into a set $\Omega_h$ of pairwise disjoint 
  simplices of maximum diameter $h$ covering $\Omega$. On this mesh we now define a piecewise 
  polynomial space. We will use $A$-conforming elements, thus the discrete space and the operators in $A$
  must match. There are three basic operators we will consider here, but the concept is general 
  and can be adapted to other operators and associated discrete spaces.
  
  Suppose the operator $A_j$ is a part of $A$ and is applied in $\eqref{eq:problem}$ to a component $U_i\in D(A_j)$
  of $U$. We now need to define a discrete subspace of $D(A_j)$.
  Suppose we have
  \begin{itemize}
    \item $A_j=\Grad$ or $A_j=\Grad^\circ$, i.e. the gradient or gradient with homogeneous (Dirichlet) boundary conditions.
          In this case we need a discrete subset of $H^1(\Omega)$ and we choose 
          globally continuous Lagrange-elements $\PS_k$ which are piecewise polynomials of maximum degree $k$.
          In the case of homogeneous boundary conditions, we also include them into in the discrete space.
          Let $I$ be a classical interpolation operator or a quasi-interpolator of Scott-Zhang type \cite{SZ90} into $\PS_k$.
          Then we have 
          \[
            \norm{U_i-IU_i}{L^2(\Omega)}+\norm{\Grad(U_i-IU_i)}{L^2(\Omega)}
              \leq C h^k
          \]
          assuming $U_i\in H^{k+1}(\Omega)$.
    \item $A_j=\Div$ or $A_j=\Div^\circ$, i.e. the divergence or divergence with homogeneous (Neumann) boundary conditions.
          Here our discrete space is a subset of $H(\Div,\Omega)$ and we use Raviart-Thomas-elements $RT_{k-1}=\PS_{k-1}^d+\vx\PS_{k-1}$
          which are vector valued, piecewise polynomials of degree $k$. Using the standard degrees of freedom we obtain globally
          normal-continuous functions. We can incorporate the homogeneous boundary conditions into the discrete space.
          Let us again denote by $I$ the standard interpolation operator into $RT_{k-1}$. Then it holds
          \[
            \norm{U_i-IU_i}{L^2(\Omega)}+\norm{\Div(U_i-IU_i)}{L^2(\Omega)}
              \leq C h^{k}
          \]
          assuming $U_i\in H^{k}(\Omega)$ such that $\Div U_i\in H^{k}(\Omega)$.
    \item $A_j=\Curl$ or $A_j=\Curl^\circ$, i.e. the curl-operator without or with homogeneous boundary conditions.
          Now the discrete space is a subset of $H(\Curl,\Omega)$ and we use N\'{e}d\'{e}lec elements (of the first kind) 
          $N_{k-1}$, defined as
          \[
            N_{k-1}=\PS_{k-1}^d\oplus S_{k},\quad
            S_{k}=\{\vq\in(\PS_{k}^H)^d\colon \vq\cdot\vx=0\}
          \]
          where $\PS_k^H$ are the homogeneous polynomials of degree $k$. Again, we have vector valued, piecewise polynomials
          of degree $k$ and, using the classical degrees of freedom for these elements, we obtain
          globally tangential-continuous functions. For their interpolation error, again denoting by $I$ the standard interpolation operator,
          it holds
          \[
            \norm{U_i-IU_i}{L^2(\Omega)}+\norm{\Curl(U_i-IU_i)}{L^2(\Omega)}
              \leq C h^k
          \]
          assuming $U_i\in H^{k}(\Omega)$ such that $\Curl U_i\in H^{k}(\Omega)$.
  \end{itemize}
  The above interpolation error estimates are classical results and can be found in many textbooks, e.g.~\cite{EG21a}.
  In this sense we choose for all components of $U$ the corresponding conforming finite element space
  and obtain a composite space $V_k(\Omega)$. Again abusing the notation with $I$ for the interpolation operator into $V_k(\Omega)$,
  we have
  \begin{gather}
    \norm{U-IU}{H}+\norm{A(U-IU)}{H}
      \leq C h^k\label{eq:interspace}
  \end{gather}
  if $U\in H^k(\Omega)$ and $AU\in H^k(\Omega)$. With this setting, the Hilbert space $H$ is 
  \[
    H=\bigotimes_i L^2(\Omega)^{d_i} 
  \]
  with the appropriate dimensions $d_i$ for the component $U_i$.
  Our fully discrete space can now be defined by
  \[
    \U
      :=\left\{
          U\in H_\rho(\R,H):
          U|_{I_m}\in\PS_q(I_m,V_k(\Omega)),
          m\in\{1,\dots,M\}
       \right\}
  \]
  which does not have to be continuous in time.
  
  The final component of our numerical method is the numerical approximation of the scalar product in time.
  To do this, we use on each $I_m$ the weighted right-sided Gauß-Radau quadrature rule
  \[
    \Qm{v}_\rho := \frac{\tau_m}{2} \sum_{i=0}^q {\omega}^m_i v(\tmi)
  \]
  whose discrete weights $\omega^m_i$ and points $\tmi$ depend on $\rho$ and are defined such
  that for all polynomials of degree at most $2q$ we have
  \[
    \Qm{p}_\rho = \scprm{p,1}:=\int_{I_m}\scp{p(t),1}\exp(-2\rho(t-t_{m-1}))\dt.
  \]
  
  Let us further introduce the discretised scalar product
  \[
    \Qmr{a,b}:=\Qm{\scp{a,b}}_\rho
  \]
  as an approximation for the scalar product $\scprm{a,b}$. Note that we have with a discrete Cauchy-Schwarz inequality
  \begin{gather}\label{eq:discr:CSU}
    \Qmr{a,b}\leq \norm{a}{Q,\rho,m}\norm{b}{Q,\rho,m}
  \end{gather}
  with the discrete norms 
  \[
    \norm{v}{Q,\rho,m}^2:=\Qmr{v,v}
    \quad\text{and}\quad
    \norm{v}{Q,\rho}^2:=\sum_{m=1}^M\Qmr{v,v}\e^{-2\rho t_{m-1}}
  \]
  as approximations of $\norm{v}{\rho,m}^2:=\scprm{v,v}$ 
  and $\norm{v}{\rho}^2$. For $v\in\U$ 
  the approximation is exact.
  
  We can now state the discrete \textbf{quadrature formulation}:
  
  For given $F\in H_\rho(\R,H)$ and $x_0\in H$, find $U^\tau_h\in\U$, such that for 
  all $\Phi\in \U$ and $m\in\{1,2,\dots,M\}$ it holds
  \begin{equation}\label{eq:dG}
    \Qmr{(\partial_t M_0+M_1+A)U^\tau_h,\Phi}
      +\scp{M_0 \jump{U^\tau_h}_{m-1}^{x_0},{\Phi}^+_{m-1}}
      =\Qmr{ F,\Phi }.
  \end{equation}
  Here, we denote by 
  \[
    \jump{U^\tau_h}_{m-1}^{x_0}:=
    \begin{cases}
      U^\tau_h(t_{m-1}^+)-U^\tau_h(t_{m-1}^-),& m\in\{2,\ldots,M\}\\ 
      U^\tau_h(t_0^+)-x_0,& m=1,
    \end{cases} 
  \]
  the jump at $t_{m-1}$ and by $\Phi^+_{m-1}:= \Phi(t_{m-1}^+)$.  
  In \cite{FrTW16} we find the following convergence result. Note that the conditions on $M_0$ imply the existence of $M_0^{1/2}$.
  \begin{thm}[\!\!\mbox{\cite[Theorem 4.7]{FrTW16}}]\label{thm:cts}
    We assume for the solution $U$ the 
    regularity     
    \[
      U\in H_\rho^{1}(\R;H^k(\Omega))\cap 
      H_\rho^{q+3}(\R;L^2(\Omega)) 
    \]
    as well as 
    \[
      AU\in H_\rho(\R; H^k(\Omega)).
    \]
    Then we have for the error of the numerical solution
    \[
      \sup_{t\in[0,T]}\norm{M_0^{1/2}(U-U_h^\tau)(t)}{H}+
      \e^{\rho T}\norm{U-U_h^\tau}{Q,\rho}
      \leq C \e^{\rho T}(T^{1/2}\tau^{q+1} + h^{k}).
    \]
  \end{thm}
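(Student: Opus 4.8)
The plan is to combine an exact consistency (Galerkin orthogonality) identity with a discrete energy estimate, the standard route for discontinuous Galerkin time-stepping, but adapted to the exponentially weighted inner product and the Gauß--Radau quadrature.

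\textbf{Consistency.} Write $B_m(\cdot,\cdot)$ for the bilinear form on the left-hand side of \eqref{eq:dG}. Since the exact solution $U$ is continuous in time, all of its jumps vanish, $\jump{U}_{m-1}^{x_0}=0$ (for $m=1$ using $x_0=U(t_0)$), and by \eqref{eq:problem} the residual $(\pt_t M_0+M_1+A)U-F$ is identically zero as a function of $t$. Hence $\scp{((\pt_t M_0+M_1+A)U-F)(t),\Phi(t)}=0$ for every $t$, so the quadrature reproduces $\Qmr{(\pt_t M_0+M_1+A)U,\Phi}=\Qmr{F,\Phi}$ with no consistency error, i.e. $B_m(U,\Phi)=\Qmr{F,\Phi}$. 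Subtracting \eqref{eq:dG} yields the exact orthogonality $B_m(U-U_h^\tau,\Phi)=0$ for all $\Phi\in\U$ and all $m$.

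\textbf{Energy identity.} I would split $U-U_h^\tau=\eta+\xi$ with $\eta:=U-\Pi U$ and $\xi:=\Pi U-U_h^\tau\in\U$, where $\Pi$ is the tensor product of the spatial interpolant $I$ of \eqref{eq:interspace} with a $\rho$-weighted temporal projection onto $\PS_q(I_m)$ that interpolates at the right Gauß--Radau node $t_m$ and is orthogonal to $\PS_{q-1}$ in the weighted inner product. Orthogonality gives $B_m(\xi,\xi)=-B_m(\eta,\xi)$, and since the quadrature is exact on $\U$ I may replace $\Qmr{\cdot,\cdot}$ by $\scprm{\cdot,\cdot}$ on the left. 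Integrating the $\pt_t M_0$-term by parts and combining it with the jump term $\scp{M_0\jump{\xi}_{m-1},\xi(t_{m-1}^+)}$, while using that $A$ is skew self-adjoint so that $\scprm{A\xi,\xi}=0$, produces the exact identity
\[
  B_m(\xi,\xi)=\tfrac12\norm{M_0^{1/2}\xi(t_m^-)}{H}^2\e^{-2\rho\tau_m}-\tfrac12\norm{M_0^{1/2}\xi(t_{m-1}^-)}{H}^2+\tfrac12\norm{M_0^{1/2}\jump{\xi}_{m-1}}{H}^2+\scprm{(\rho M_0+M_1)\xi,\xi}.
\]
The $\rho M_0$-shift is generated precisely by differentiating the weight $\e^{-2\rho(t-t_{m-1})}$, and the positivity $\rho M_0+M_1\geq\gamma$ turns the last term into $\gamma\norm{\xi}{Q,\rho,m}^2$; thus coercivity absorbs the zeroth-order part directly and no Grönwall argument is needed.

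\textbf{Right-hand side and summation.} On the right I would estimate the three parts of $B_m(\eta,\xi)$ separately. The $M_1$- and $A$-contributions are controlled by the discrete Cauchy--Schwarz inequality \eqref{eq:discr:CSU} and Young's inequality, where $\norm{M_1\eta}{Q,\rho,m}$ and $\norm{A\eta}{Q,\rho,m}$ are bounded via \eqref{eq:interspace} and the regularity $AU\in H_\rho(\R;H^k(\Omega))$, giving the spatial factor $h^k$. Multiplying the per-interval identity by $\e^{-2\rho t_{m-1}}$ and summing over $m$ telescopes the nodal terms, because $\e^{-2\rho\tau_m}\e^{-2\rho t_{m-1}}=\e^{-2\rho t_m}$, down to $\tfrac12\norm{M_0^{1/2}\xi(t_M^-)}{H}^2\e^{-2\rho T}$ (the initial term being an $\ord{\tau^{q+1}}$ projection error from $x_0$), while the coercive terms assemble into $\gamma\norm{\xi}{Q,\rho}^2$. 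After absorbing half of the latter by Young, the summed right-hand side is $C(T\tau^{2q+2}+h^{2k})$, the factor $T$ coming from the summation over the time intervals; the triangle inequality with the approximation bound for $\eta$ then returns the stated rates, and converting the weighted nodal bound at $t=T$ back to $\norm{M_0^{1/2}\cdot}{H}$ contributes the prefactor $\e^{\rho T}$.

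\textbf{Main obstacle.} The delicate step is the $\pt_t M_0$-contribution to $B_m(\eta,\xi)$: a direct bound gives only the suboptimal order $\tau^q$, so one must integrate by parts to move the derivative onto $\xi$ and invoke orthogonality of the temporal projection against $\pt_t\xi\in\PS_{q-1}$ to annihilate the leading term, leaving only nodal, $\tau^{q+1}$-small boundary contributions. Reconciling this orthogonality with the fact that $B_m$ uses the \emph{quadrature} $\Qmr{\cdot,\cdot}$ rather than the exact weighted integral---so that on the mixed products involving the non-polynomial $\eta$ the quadrature is not exact---is the crux, and is the reason for the comparatively high temporal regularity $U\in H_\rho^{q+3}(\R;L^2(\Omega))$ needed to control the attendant quadrature error at full order. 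Finally, $\sup_{t\in[0,T]}\norm{M_0^{1/2}(U-U_h^\tau)(t)}{H}$ is recovered from the telescoped nodal values of $\xi$ together with a temporal inverse estimate controlling $\xi$ within each interval, plus the approximation bound for $\eta$.
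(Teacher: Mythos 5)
Your skeleton (Galerkin orthogonality, the weighted energy identity in which the $\rho M_0$-shift is generated by differentiating the weight, coercivity from $\rho M_0+M_1\geq\gamma$ with no Gr\"onwall argument) matches the structure of the proof in \cite{FrTW16}, which this paper cites rather than reproves. But the step you yourself flag as the crux is left genuinely open, and it is self-inflicted by your choice of projection. You take a weighted Radau \emph{projection} (interpolation at $t_m$ plus orthogonality to $\PS_{q-1}$ in the weighted inner product), whose orthogonality lives in the exact inner product $\scprm{\cdot,\cdot}$ while the scheme only ever evaluates $\Qmr{\cdot,\cdot}$; you then hope extra regularity controls the resulting quadrature error, without showing it. The paper's route uses instead the temporal \emph{interpolation} $\I$ at the weighted Gau\ss--Radau quadrature points, \eqref{eq:inter:I}: since $\eta=U-\I U$ vanishes at every quadrature node, the terms $\Qmr{M_1\eta,\Phi}$ and $\Qmr{A\eta,\Phi}$ (for the temporal part of the error) vanish \emph{identically} by \eqref{eq:QI} --- there is no quadrature/orthogonality mismatch to reconcile at all. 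What survives are precisely the $\pt_t\eta$ contribution and the nodal terms $\scp{M_0\jump{\eta}_{m-1},\Phi^+_{m-1}}$ (note $\eta(t_{m-1}^+)\neq 0$ even though $\eta(t_{m-1}^-)=0$); these are handled by superconvergent pointwise interpolation estimates at the discrete times, \cite[Lemma 3.14]{FrTW16}, and that is where the regularity $U\in H_\rho^{q+3}(\R;L^2(\Omega))$ is consumed --- not in a quadrature-error analysis. The spatial part of $\eta$ is treated separately via \eqref{eq:interspace}, giving the $h^k$ term.

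The second gap is the sup-norm: recovering $\sup_{t\in[0,T]}\norm{M_0^{1/2}(U-U_h^\tau)(t)}{H}$ from ``telescoped nodal values plus a temporal inverse estimate'' does not reach the claimed rate. The energy argument controls the nodal values $\norm{M_0^{1/2}\xi(t_m^-)}{H}$ and the global norm $\norm{\xi}{Q,\rho}$; inside an interval the inverse estimate $\norm{\xi}{L^\infty(I_m)}\leq C\tau_m^{-1/2}\norm{\xi}{L^2(I_m)}$ loses half a power, yielding only $\tau^{q+1/2}$ (and a degraded $\tau^{-1/2}h^k$) instead of $\tau^{q+1}+h^k$. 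The full-order pointwise bound in \cite[Theorem 3.12]{FrTW16} follows Akrivis--Makridakis \cite{AM04}: one tests with (the Gau\ss--Radau interpolant of) $\phi(t)=\frac{\tau_m}{t-t_{m-1}}\,\xi(t)$ and exploits a Gau\ss--Radau identity --- exactly the mechanism this paper reproduces in Lemma~\ref{lem:AK04} and the proof of its final theorem. Without that device (or a comparable superconvergence argument) your sketch proves the $\norm{\cdot}{Q,\rho}$ part of the theorem but not the stated $L^\infty$ part.
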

  \begin{rem}
    The above theorem is given in \cite{FrTW16} in the setting of $A=\spmtrx{0&\Div\\\Grad^\circ&0}$.
    But, as stated there, it holds in the general setting presented here by using $A$-conforming
    finite elements in space.
  \end{rem}
  
  The proof of this theorem is based on the 
  interpolation operator $\I:C([0,T];H)\to\U$ in time, which uses locally on $I_m$
  the Gauss-Radau points $\tmi$ of the quadrature rule $\Qm{\cdot}$:
  \begin{gather}\label{eq:inter:I}
    (\I_m v-v)(\tmi)=0,\qmbox{for all}m\in\{1,\dots,M\},\,i\in\{0,\dots,q\}.
  \end{gather}
  As a consequence it holds for all $v\in C([0,T];H)$
  \begin{equation}\label{eq:QI}
    \Qm{\I_m v}=\Qm{v}.
  \end{equation}

  We will now present two small extensions of the given convergence result.
  First, in the above result the jumps of the error are not estimated. This will be done next.
  \begin{thm}
    Under the assumptions of Theorem \ref{thm:cts} it holds
    \[
      \sum_{m=1}^M\e^{-2\rho t_{m-1}}\norm{M_0^{1/2}\jump{U-U^\tau_h}_{m-1}^0}{H}^2
        \leq CT\tau^{2q+1}.
    \]
  \end{thm}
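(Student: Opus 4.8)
The plan is to decompose the error into a temporal-interpolation part and a fully discrete part, estimate the interpolation jumps directly, and control the discrete jumps through the standard dG energy (coercivity) identity obtained by testing the error equation with the discrete error itself.

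First I would set $\eta:=U-\I U$ and $\xi:=\I U-U^\tau_h\in\U$, so that, by linearity of the jump, $\jump{U-U^\tau_h}_{m-1}^0=\jump{\eta}_{m-1}+\jump{\xi}_{m-1}$ and it suffices to bound the two families of jumps separately. For the interpolation part I use that $U$ is continuous in time and that the right-sided Radau nodes contain the right endpoint $t_m$, so that $(U-\I_m U)(t_m)=0$ and therefore $\jump{\eta}_{m-1}$ collapses to the pointwise interpolation error $(U-\I_m U)(t_{m-1}^+)$ at the \emph{left} endpoint of $I_m$. A trace-type interpolation estimate gives $\norm{\jump{\eta}_{m-1}}{H}^2\le C\tau_m^{2q+1}\int_{I_m}\norm{\partial_t^{q+1}U}{H}^2\dt$, and summing against the weights $\e^{-2\rho t_{m-1}}\le1$ already yields the claimed order $CT\tau^{2q+1}$ for this contribution.

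For the discrete part I would derive the error equation for $\xi$ by inserting $\I U$ into \eqref{eq:dG} and subtracting: for all $\Phi\in\U$,
\begin{equation*}
  \Qmr{(\partial_t M_0+M_1+A)\xi,\Phi}+\scp{M_0\jump{\xi}_{m-1}^0,\Phi_{m-1}^+}=r_m(\Phi),
\end{equation*}
where, using $F=(\partial_t M_0+M_1+A)U$ together with $\I U=U$ at every quadrature node \eqref{eq:inter:I} (so the $M_1$-, $A$- and source contributions cancel exactly under the quadrature), the residual simplifies to $r_m(\Phi)=-\Qmr{\partial_t M_0\eta,\Phi}+\scp{M_0\jump{\I U}_{m-1}^{x_0},\Phi_{m-1}^+}$. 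Testing with $\Phi=\xi$, a weighted integration by parts on each $I_m$ turns the time-derivative and jump terms into the classical dG identity
\begin{equation*}
  \Qmr{\partial_t M_0\xi,\xi}+\scp{M_0\jump{\xi}_{m-1}^0,\xi_{m-1}^+}=\tfrac12\e^{-2\rho\tau_m}\norm{M_0^{1/2}\xi_m^-}{H}^2-\tfrac12\norm{M_0^{1/2}\xi_{m-1}^-}{H}^2+\tfrac12\norm{M_0^{1/2}\jump{\xi}_{m-1}^0}{H}^2+\rho\norm{M_0^{1/2}\xi}{Q,\rho,m}^2.
\end{equation*}
Multiplying by $\e^{-2\rho t_{m-1}}$ and summing, the boundary terms telescope into a nonnegative end term, $\Qmr{A\xi,\xi}=0$ by skew-selfadjointness, and the $\rho M_0$ part combines with $\Qmr{M_1\xi,\xi}$ to give $\sum_m\e^{-2\rho t_{m-1}}\Qm{\scp{(\rho M_0+M_1)\xi,\xi}}_\rho\ge\gamma\norm{\xi}{Q,\rho}^2\ge0$. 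This isolates $\tfrac12\sum_m\e^{-2\rho t_{m-1}}\norm{M_0^{1/2}\jump{\xi}_{m-1}^0}{H}^2$ on the left, bounded by $\sum_m\e^{-2\rho t_{m-1}}r_m(\xi)$ plus absorbable coercive terms.

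The decisive step — and the main obstacle — is to show that $\sum_m\e^{-2\rho t_{m-1}}r_m(\xi)$ is of order $\tau^{2q+1}$. Integrating $-\Qmr{\partial_t M_0\eta,\Phi}$ by parts, the boundary term at $t_m$ vanishes because $\eta(t_m^-)=0$, while the term at $t_{m-1}^+$ cancels \emph{exactly} against $\scp{M_0\jump{\I U}_{m-1}^{x_0},\Phi_{m-1}^+}$ since $\jump{\I U}_{m-1}^{x_0}=-\eta(t_{m-1}^+)$; this leaves $r_m(\xi)=\scprm{M_0\eta,\partial_t\xi}-2\rho\scprm{M_0\eta,\xi}-E_m$, with $E_m$ the Radau quadrature error of $\scp{M_0\partial_t U,\xi}$. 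The term $-2\rho\scprm{M_0\eta,\xi}$ and $E_m$ are routine: Cauchy--Schwarz, respectively a Peano/Taylor estimate using exactness of $\Qm{\cdot}_\rho$ up to degree $2q$ (cf. \eqref{eq:QI}), bound them by $C\tau_m^{q+1}\norm{\xi}{\rho,m}$. The delicate term is $\scprm{M_0\eta,\partial_t\xi}$: a naive Cauchy--Schwarz combined with an inverse inequality in time loses a full power of $\tau$ and only gives $\tau_m^{q}$. The remedy is to exploit that $\eta$ vanishes at every node, so that $\Qmr{M_0\eta,\partial_t\xi}=0$ and hence $\scprm{M_0\eta,\partial_t\xi}$ equals a \emph{pure} quadrature error of the product of the smooth factor $M_0U$ with the polynomial $\partial_t\xi$ of degree $q-1$; the Peano/Taylor estimate then controls it by $C\tau_m^{q+3/2}\norm{\xi}{\rho,m}$. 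Summing, applying \eqref{eq:discr:CSU} and Young's inequality with a small parameter absorbs the $\norm{\xi}{Q,\rho}^2$ contributions into the coercivity term and leaves $\sum_m\e^{-2\rho t_{m-1}}r_m(\xi)\le CT\tau^{2q+1}$. A triangle inequality combining the interpolation and discrete jumps then gives the assertion.
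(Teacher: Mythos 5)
Your treatment of the interpolation jumps coincides with the paper's own argument: since the right-sided Radau nodes contain the right endpoint, the jump of $\eta=U-\I U$ collapses to the pointwise interpolation error at $t_{m-1}^+$, and a pointwise estimate (the paper invokes \cite[Lemma 3.14]{FrTW16}) together with $\sum_m\tau_m\e^{-2\rho t_{m-1}}\leq T$ yields the dominant $CT\tau^{2q+1}$ contribution; this half of your proof is fine. For the discrete jumps, however, the paper does not re-derive anything — it simply quotes the proof of \cite[Theorem 4.7]{FrTW16}, which already contains the (stronger) bound $\sum_m\e^{-2\rho t_{m-1}}\norm{M_0^{1/2}\jump{\xi}_{m-1}^0}{H}^2\leq C\tau^{2(q+1)}$ — whereas you attempt a self-contained energy argument, and that is where your proposal has a genuine gap.

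The flaw is that your $\xi:=\I U-U_h^\tau$ is not an element of $\U$: the operator $\I$ interpolates in time only, so $\I U(t)\in H$ but in general $\I U(t)\notin V_k(\Omega)$, and the quadrature formulation \eqref{eq:dG} holds only for test functions $\Phi\in\U$. You are therefore not entitled to test the error equation with $\Phi=\xi$, and the scheme gives no control over $\Qmr{(\pt_t M_0+M_1+A)U_h^\tau,\xi}$ for such $\xi$. Worse, your residual simplification depends essentially on this inadmissible choice: the cancellations $\Qmr{M_1\eta,\Phi}=0$, $\Qmr{A\eta,\Phi}=0$ and $\Qmr{M_0\eta,\pt_t\xi}=0$ all use that $\eta$ vanishes at every quadrature node, which is true only for the pure time-interpolation error. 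If you repair admissibility by setting $\xi:=\I IU-U_h^\tau\in\U$ with $I$ the spatial interpolant, then $\eta=U-\I IU$ no longer vanishes at the nodes $\tmi$, the residual acquires the spatial terms $\Qmr{(\pt_t M_0+M_1+A)(U-IU),\xi}$, and your energy argument delivers at best a bound of the form $CT(\tau^{2q+1}+h^{2k})$ for the discrete jumps rather than the $h$-free bound claimed — recovering the paper's statement then requires exactly the machinery of the cited proof in \cite{FrTW16}. I note that the remainder of your mechanics is sound: the per-interval energy identity with the weight $\e^{-2\rho(t-t_{m-1})}$, the telescoping after multiplication by $\e^{-2\rho t_{m-1}}$, the exactness of the quadrature on products of degree at most $2q$, and the Peano-type estimates for $E_m$ and for $\scprm{M_0\eta,\pt_t\xi}$ (where exploiting $\Qmr{M_0\eta,\pt_t\xi}=0$ to avoid the inverse-inequality loss is exactly the right idea) would indeed give order $\tau^{2(q+1)}$ in a time-semidiscrete setting; the obstruction is purely the spatial discretization.
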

  \begin{proof}
    Let us decompose the error as
    \[
      U-U^\tau_h=(U-\I U)-(U^\tau_h-\I U)=:\eta+\xi.
    \]
    The proof of \cite[Theorem 4.7]{FrTW16} already gives
    \begin{align*}
      \sum_{m=1}^M\e^{-2\rho t_{m-1}}\norm{M_0^{1/2}\jump{\xi}_{m-1}^0}{H}^2
        &\leq C_1\tau^{2(q+1)}.
    \end{align*}
    The jumps of the interpolation error $\eta$ can be rewritten as
    \begin{align*}
      \sum_{m=1}^M\e^{-2\rho t_{m-1}}&\norm{M_0^{1/2}\jump{\eta}_{m-1}^0}{H}^2\\
        &\leq\sum_{m=1}^M\!\e^{-2\rho t_{m-1}}(\norm{M_0^{1/2}\jump{U-IU}_{m-1}^0}{H}+\norm{M_0^{1/2}\jump{IU-\I IU}_{m-1}^0}{H})^2\\
        &=\sum_{m=1}^M\e^{-2\rho t_{m-1}}\norm{M_0^{1/2}(IU-\I_m IU)(t_{m-1}^+)}{H}^2
    \end{align*}
    due to $U-IU$ being continuous and the interpolation property $\I_{m-1} IU(t_{m-1}^-)=IU(t_{m-1}^-)$ in each time node $t_{m-1}$.
    Using \cite[Lemma 3.14]{FrTW16} to estimate the interpolation error at the discrete times
    and 
    $
      \sum_{m=1}^M\tau_m\e^{-2\rho t_{m-1}}\leq T,
    $
    it follows
    \begin{align*}
      \sum_{m=1}^M\e^{-2\rho t_{m-1}}\norm{M_0^{1/2}\jump{\eta}_{m-1}^0}{H}^2
        \leq CT\tau^{2q+1}.
    \end{align*}
    Combining the estimates finishes the proof.
  \end{proof}
  \begin{rem}\label{rem:energy:dG}
    The above theorem shows that the jumps converge with half an order less than the function itself.
    As a consequence the dissipation of energy, mentioned in \cite[Rem. 3.3]{FrTW16},
    seen by comparing
    \[
     \e^{-2\rho t}\norm{M_0^{1/2}U(t)}{H}^2
       +2\langle(\rho M_0+M_1)U,U\rangle_{\rho,(0,t)}
       =\norm{M_0^{1/2}U(0)}{H}^2.
    \]
    for the exact solution $U$ at $t$ and
    \begin{multline*}
     \e^{-2\rho t_i}\norm{M_0^{1/2}{U^\tau_h}(t_i^-)}{H}^2
       +2\langle(\rho M_0+M_1)U^\tau_h,U^\tau_h\rangle_{\rho,(0,t_i)}\\
       +2\sum_{m=1}^i \e^{-2\rho t_{m-1}}\norm{M_0^{1/2} \jump{U^\tau_h}_{m-1}^{x_0}}{H}^2
       =\norm{M_0^{1/2} U^\tau_h(0^-)}{H}^2
    \end{multline*}
    for the dG-method's solution $U^\tau_h$ at the discrete times $t_i>0$,
    is converging to zero with order $q+1/2$ and thus the method is asymptotically energy preserving 
    for all $q\geq 0$.
  \end{rem}

  As a second improvement we provide an estimate of the interpolation error in the weighted norm and 
  thereby a convergence result in the full weighted norm.
  \begin{thm}\label{thm:fullcts}
    Under the assumptions of Theorem \ref{thm:cts} it holds 
    \[
      \norm{u-\I_m u}{\rho,m}\leq C \tau_m^{q+1}\norm{u}{H_\rho^{q+1}(I_m;H)},
    \]
    and for the error of the numerical solution
    \[
      \norm{U-U_h^\tau}{\rho}
      \leq C (T^{1/2}\tau^{q+1} + h^{k}).
    \]    
  \end{thm}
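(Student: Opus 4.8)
The first estimate is local on $I_m$, so I would rescale to a reference interval and use that $\I_m$ reproduces polynomials of degree $q$. Setting $t=t_{m-1}+\tau_m\hat t$ maps $I_m$ onto $(0,1]$ and turns $\I_m$ into interpolation at the (now $\rho\tau_m$-dependent) Gauss--Radau nodes, which still preserves $\PS_q$. On $I_m$ the weight obeys $\e^{-2\rho\tau_m}\leq\exp(-2\rho(t-t_{m-1}))\leq 1$, so $\norm{\cdot}{\rho,m}$ is equivalent to the plain $L^2(I_m;H)$ norm up to a factor depending only on $\rho\tau_m$. I would then invoke the Bramble--Hilbert lemma for the bounded, $\PS_q$-preserving operator $\I_m$ on the reference interval and scale back: the Jacobians coming from the norm and from the $(q+1)$-st derivative combine to the stated power $\tau_m^{q+1}$, leaving $\norm{u}{H_\rho^{q+1}(I_m;H)}$ on the right.

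For the global bound I would split the error with the temporal interpolant $\I U$ as $U-U_h^\tau=\eta+\xi$, where $\eta:=U-\I U$ and $\xi:=\I U-U_h^\tau$, and exploit the exactness of the quadrature. By \eqref{eq:inter:I} the interpolation error $\eta$ vanishes at every node $\tmi$, so $\norm{\eta}{Q,\rho}=0$, while its continuous norm is obtained by summing the local estimate just proven over $m$; since $\e^{-2\rho t_{m-1}}\exp(-2\rho(t-t_{m-1}))=\e^{-2\rho t}$, the local right-hand sides combine into the global weighted norm and give $\norm{\eta}{\rho}\leq C\tau^{q+1}\norm{U}{H_\rho^{q+1}(\R;H)}$. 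The term $\xi$ is piecewise of degree $q$ in time, hence $\scp{\xi,\xi}$ has degree $2q$ and the Radau rule integrates it exactly, so $\norm{\xi}{\rho}=\norm{\xi}{Q,\rho}$. Because $\eta$ vanishes at the nodes, $\xi$ and $U-U_h^\tau$ share the same nodal values, whence $\norm{\xi}{Q,\rho}=\norm{U-U_h^\tau}{Q,\rho}$, which Theorem~\ref{thm:cts} bounds by $C(T^{1/2}\tau^{q+1}+h^k)$. Adding the two contributions yields the claim, with the spatial order $h^k$ entering solely through Theorem~\ref{thm:cts}.

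I expect the only real obstacle to be the uniformity of the constant in the first step. Both the Radau nodes and the weight depend on the product $\rho\tau_m$, and the weighted stability of $\I_m$ deteriorates as $\rho\tau_m\to\infty$, when the weight concentrates near $t_{m-1}$ while the right-hand Radau node stays fixed at $t_m$; keeping $\rho\tau$ bounded makes the weight comparable to a constant on each $I_m$ and keeps the reference operator uniformly bounded. By contrast the global step is essentially free once the local estimate is available: it rests only on the exactness of the Radau rule for degree $2q$ and on the nodal vanishing of $\eta$, which together let $\norm{\cdot}{\rho}$ and $\norm{\cdot}{Q,\rho}$ be interchanged on the discrete part of the error and reduce everything to Theorem~\ref{thm:cts}.
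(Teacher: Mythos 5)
Your proposal is correct and follows essentially the same route as the paper: the local estimate is Bramble--Hilbert plus scaling to the reference interval (the paper carries this out in the weighted spaces via its Lemmas~\ref{lem:app1} and~\ref{lem:app2}, where the constant likewise depends on $\rho\tau_m$ through the product rule, so your two-sided bound $\e^{-2\rho\tau_m}\leq w\leq 1$ is only a cosmetic variant), and the global bound is exactly the paper's one-line argument $\norm{U-U_h^\tau}{\rho}\leq\norm{\I U-U_h^\tau}{\rho}+\norm{U-\I U}{\rho}=\norm{U-U_h^\tau}{Q,\rho}+\norm{U-\I U}{\rho}$, combining quadrature exactness on $\U$ with the nodal agreement \eqref{eq:QI} and then invoking Theorem~\ref{thm:cts}. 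Your closing remark about uniformity of the constant as $\rho\tau_m$ varies correctly identifies the only delicate point, which the paper leaves implicit.
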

  \begin{proof}
    Let us start with the interpolation error estimate.
    We use the transformation $\hat t = (t-t_{m-1})/\tau_m$ from $I_m$ 
    to the reference interval $[0,1]$. Note that the weight function $w(t)=\exp(-2\rho(t-t_{m-1}))$ on $I_m$ is transformed into 
    \[
      \hat w(\hat t)=\e^{-2\rho\tau_m\hat t}.
    \]
%
    With $\ell=q+1$ in Lemma~\ref{lem:app2} and the transformation we have 
    \[
      \norm{U-\I_m U}{L_\rho^2(I_m)}\leq C\tau_m^{q+1}|U|_{H_\rho^{q+1}(I_m;H)}
    \]
    and upon summation
    \begin{align*}
      \norm{U-\I U}{\rho,[0,T]}^2
        &=\sum_{m=1}^M\e^{2\rho t_{m-1}}\norm{U-\I U}{L_\rho^2(I_m;H)}^2
        \leq C\tau^{2(q+1)}\sum_{m=1}^M\norm{\e^{-2\rho t}\pt_t^{q+1} U}{L^2(I_m;H)}^2\\
        &= C\tau^{2(q+1)}|U|_{H_\rho^{q+1}([0,T];H)}^2,
    \end{align*}
    where the local and global definition of the weights were used.
    
    Now we can estimate the convergence error by
    \[
      \norm{U-U_h^\tau}{\rho}
        \leq \norm{\I U-U_h^\tau}{\rho}+\norm{U-\I U}{\rho}
        = \norm{U-U_h^\tau}{Q,\rho}+\norm{U-\I U}{\rho}
    \]
    due to \eqref{eq:QI}.
    Then the first term is estimated by Theorem~\ref{thm:cts} and the second by the interpolation error estimate above.
  \end{proof}
  The two auxiliary lemmas needed in above proof follow the classical ideas of Bramble and Hilbert~\cite{BH71}, see also \cite{Apel99}. 
  \begin{lem}\label{lem:app1}
    Let $\ell\geq1$ and $u\in H_\rho^\ell([0,1])$. Then it holds
    \[
      |u|_{H_\rho^{\ell}(I)}
        \leq \inf_{v\in\PS_{\ell-1}([0,1])}\norm{u-v}{H_\rho^\ell([0,1])}
        \leq c_2|u|_{H_\rho^\ell([0,1])}.
    \]
  \end{lem}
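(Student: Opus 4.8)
The plan is to treat the two inequalities separately: the left one is elementary, while the right one carries the genuine Bramble--Hilbert/Deny--Lions content. For the left inequality I would observe that for every $v\in\PS_{\ell-1}([0,1])$ the top-order derivative vanishes, $\pt_t^\ell v\equiv 0$, so that $\pt_t^\ell(u-v)=\pt_t^\ell u$ and hence $|u-v|_{H_\rho^\ell(I)}=|u|_{H_\rho^\ell(I)}$. Since the $H_\rho^\ell$-seminorm is dominated by the full $H_\rho^\ell$-norm, this gives $|u|_{H_\rho^\ell(I)}=|u-v|_{H_\rho^\ell(I)}\le\norm{u-v}{H_\rho^\ell([0,1])}$ for each admissible $v$, and taking the infimum over $v$ yields the lower bound.

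For the right inequality I would first reduce to the unweighted situation. On the bounded interval $[0,1]$ the weight $\exp(-2\rho t)$ is squeezed between the positive constants $\min(1,\e^{-2\rho})$ and $\max(1,\e^{-2\rho})$, so the weighted norm $\norm{\cdot}{H_\rho^\ell([0,1])}$ and its associated seminorm are equivalent to their unweighted counterparts, with equivalence constants depending only on $\rho$ and $\ell$. It therefore suffices to prove $\inf_{v\in\PS_{\ell-1}}\norm{u-v}{H^\ell([0,1])}\le c\,|u|_{H^\ell([0,1])}$ in the unweighted norm and then absorb the weight-equivalence constants into $c_2$.

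The unweighted estimate is the classical Deny--Lions lemma, which I would establish by the standard compactness argument. Assuming it fails, there is a sequence $(u_n)$ whose quotient norms $\inf_v\norm{u_n-v}{H^\ell([0,1])}$ all equal $1$ while $|u_n|_{H^\ell([0,1])}\to 0$. Choosing near-optimal representatives $\tilde u_n=u_n-v_n$ keeps $(\tilde u_n)$ bounded in $H^\ell([0,1])$, and since $|\tilde u_n|_{H^\ell}=|u_n|_{H^\ell}\to 0$ the $\ell$-th derivatives tend to $0$ in $L^2$. The compact embedding $H^\ell([0,1])\hookrightarrow H^{\ell-1}([0,1])$ furnishes a subsequence converging in $H^{\ell-1}$, which together with the vanishing of the top derivatives makes $(\tilde u_n)$ Cauchy in $H^\ell$ with limit $u_*$ satisfying $\pt_t^\ell u_*=0$, i.e.\ $u_*\in\PS_{\ell-1}([0,1])$. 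But then the quotient norm of $\tilde u_n$ converges to that of $u_*$, namely $0$, contradicting that it equals $1$.

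I expect the main obstacle to be precisely this last step: the compactness argument proves existence of a finite $c_2$ but obscures that it is genuinely independent of $u$, and one must argue carefully on the quotient space $H^\ell([0,1])/\PS_{\ell-1}$ to see that the seminorm induces a norm there equivalent to the quotient norm. An equivalent constructive route that makes the $u$-independence transparent is to choose $v$ as the averaged Taylor polynomial so that $\pt_t^j(u-v)$ has vanishing mean for $0\le j\le \ell-1$, and then to iterate the Poincaré--Wirtinger inequality downward from $j=\ell$, bounding each lower-order derivative $\norm{\pt_t^j(u-v)}{L^2}$ by $|u|_{H^\ell}$; this avoids compactness at the cost of tracking the product of the iterated Poincaré constants, which then enters $c_2$ explicitly.
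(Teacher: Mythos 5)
Your proof is correct. The first inequality is argued exactly as in the paper: $\pt_t^\ell v\equiv 0$ for $v\in\PS_{\ell-1}([0,1])$, so the seminorm is unchanged and is dominated by the full norm of $u-v$. For the second inequality the paper simply invokes the contradiction argument of Bramble--Hilbert, run directly in the weighted space $H_\rho^\ell([0,1])$; you instead first strip the weight, using that $\exp(-2\rho t)$ is bounded above and below by positive constants on $[0,1]$, and then carry out the same compactness/contradiction argument in the unweighted space. This is a legitimate and slightly cleaner variant: the compact embedding $H^\ell([0,1])\hookrightarrow H^{\ell-1}([0,1])$ and the identification of the limit as a polynomial are textbook facts, whereas the paper's route implicitly requires checking that the Rellich-type compactness and the characterization of the kernel of the seminorm survive in the weighted setting (they do, precisely because of the norm equivalence you state, so the two routes are ultimately the same argument in different clothing). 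One point worth making explicit either way: the equivalence constants, and hence $c_2$, depend on the weight bounds. In the application inside Theorem~\ref{thm:fullcts} the transformed weight on $[0,1]$ is $\e^{-2\rho\tau_m\hat t}$, so your constant depends on $\rho\tau_m$; since $\rho\tau_m\leq\rho T$ this is uniformly bounded, but the uniformity should be noted, as the lemma is used on every interval $I_m$. Your constructive alternative --- choosing $v$ so that $\int_0^1\pt_t^j(u-v)\,\mathrm{d}t=0$ for $0\leq j\leq\ell-1$ and iterating the Poincar\'e--Wirtinger inequality downward --- goes beyond the paper and buys an explicit, manifestly $u$-independent constant, at the cost of tracking the product of Poincar\'e constants; it also makes the dependence of $c_2$ on the weight transparent, which the compactness argument obscures.
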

  \begin{proof}
    The first inequality follows directly from definition of the norms and $v\in\PS_{\ell-1}([0,1])$
    \[
      \inf_{v\in\PS_{\ell-1}([0,1])}\norm{u-v}{H_\rho^\ell([0,1])}
       = \inf_{v\in\PS_{\ell-1}([0,1])}\left(|u|_{H_\rho^{\ell}([0,1])}^2+\norm{u-v}{H_\rho^{\ell-1}([0,1])}^2 \right)^{1/2}
       \geq |u|_{H_\rho^{\ell}([0,1])}.
    \]
    The other inequality is proven by contradiction as in \cite{BH71}.
  \end{proof}
  \begin{lem}\label{lem:app2}
    Let $1\leq \ell\leq q+1$, $\hat \I$ be the mapped Lagrange interpolation operator using the points 
    $t_i\in [0,1]$, $i\in\{0,\dots,q\}$ and $\hat u\in H_\rho^{\ell}([0,1])$. Then it holds
    \[
      \norm{\hat u-\hat \I \hat u}{L_\rho^2([0,1])}
        \leq C |\hat u|_{H_\rho^\ell([0,1])}.
    \]
  \end{lem}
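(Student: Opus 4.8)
The plan is to run the standard Bramble--Hilbert argument already used for Lemma~\ref{lem:app1}, now for the interpolation operator rather than the best polynomial approximation. First I would exploit that $\hat\I$ reproduces polynomials: since $\hat\I$ interpolates at the $q+1$ points $t_0,\dots,t_q$ and maps into $\PS_q([0,1])$, it satisfies $\hat\I v=v$ for every $v\in\PS_q([0,1])$, and in particular for every $v\in\PS_{\ell-1}([0,1])$ because $\ell\leq q+1$. Writing $\mathrm{Id}$ for the identity, this reproduction property lets me insert an arbitrary lower-order polynomial,
\[
  \hat u-\hat\I\hat u=(\mathrm{Id}-\hat\I)(\hat u-v),\qquad v\in\PS_{\ell-1}([0,1]),
\]
so that the whole interpolation error is expressed through the operator $\mathrm{Id}-\hat\I$ acting on the difference $\hat u-v$.

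The second and central step is to bound $\mathrm{Id}-\hat\I$ as a map from $H_\rho^\ell([0,1])$ into $L_\rho^2([0,1])$. Using the Lagrange representation $\hat\I w=\sum_{i=0}^q w(t_i)L_i$ with the fixed basis polynomials $L_i\in\PS_q([0,1])$, the triangle inequality gives $\norm{\hat\I w}{L_\rho^2([0,1])}\leq\big(\sum_{i=0}^q\norm{L_i}{L_\rho^2([0,1])}\big)\max_i|w(t_i)|$. Here I would invoke the one-dimensional Sobolev embedding $H^1([0,1])\hookrightarrow C([0,1])$ to control the point values by $\max_i|w(t_i)|\leq\norm{w}{L^\infty([0,1])}\leq C\norm{w}{H^1([0,1])}$, and since $\ell\geq1$ and the weight $\exp(-2\rho t)$ is bounded above and below by positive constants on the compact interval $[0,1]$, this is in turn bounded by $C\norm{w}{H_\rho^\ell([0,1])}$. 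Together with the trivial bound for the identity part, this yields $\norm{(\mathrm{Id}-\hat\I)w}{L_\rho^2([0,1])}\leq C\norm{w}{H_\rho^\ell([0,1])}$.

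Finally I would apply this estimate with $w=\hat u-v$, note that $(\mathrm{Id}-\hat\I)(\hat u-v)=\hat u-\hat\I\hat u$ is independent of $v$, and pass to the infimum over $v\in\PS_{\ell-1}([0,1])$:
\[
  \norm{\hat u-\hat\I\hat u}{L_\rho^2([0,1])}
    \leq C\inf_{v\in\PS_{\ell-1}([0,1])}\norm{\hat u-v}{H_\rho^\ell([0,1])}
    \leq Cc_2\,|\hat u|_{H_\rho^\ell([0,1])},
\]
where the last inequality is exactly the right-hand bound of Lemma~\ref{lem:app1}. The main obstacle is the stability bound of the second step, and in particular keeping track of how its constant depends on $\rho$: in the application the lemma is used after the substitution $\hat t=(t-t_{m-1})/\tau_m$, where the effective weight parameter is $\rho\tau_m$ rather than $\rho$, so I must ensure $C$ does not degrade. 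Since the weight on $[0,1]$ lies between $\exp(-2\rho)$ and $1$, the norm-equivalence constants are harmless for the bounded range of $\rho\tau_m$ permitted by the mesh assumption, leaving $C$ uniform; making this dependence explicit is the only delicate point.
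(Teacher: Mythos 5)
Your proof is correct, and it shares the Bramble--Hilbert skeleton of the paper's argument: insert a polynomial $v\in\PS_{\ell-1}([0,1])$, prove a stability bound for the interpolation error in terms of $\norm{\hat u-v}{H_\rho^\ell([0,1])}$, and conclude by passing to the infimum via Lemma~\ref{lem:app1}. The middle step, however, is organized differently. You invoke polynomial reproduction, $\hat\I v=v$ for $v\in\PS_q([0,1])$, to write $\hat u-\hat\I\hat u=(\mathrm{Id}-\hat\I)(\hat u-v)$, and then bound $\hat\I$ through its Lagrange representation together with the embedding $H^1([0,1])\hookrightarrow C([0,1])$ and two-sided bounds on the weight. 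The paper never uses reproduction: it splits by the triangle inequality into $\norm{\hat u-\hat v}{L_\rho^2([0,1])}+\norm{\hat v-\hat\I\hat u}{L_\rho^2([0,1])}$, controls the second term by equivalence of norms on the finite-dimensional space $\{\hat w\,p:\,p\in\PS_q\}$ through the point-evaluation functionals, swaps $\hat\I\hat u$ for $\hat u$ at the nodes by consistency, and converts between weighted and unweighted $H^\ell$-norms with the product rule $(\hat w\hat v)'=\hat w(-2\rho\tau\hat v+\hat v')$. Both mechanisms rest on the same analytic fact --- continuity of point evaluations on $H^\ell([0,1])$ for $\ell\geq 1$ --- so they are equivalent in substance; yours is the more standard operator-stability formulation, while the paper's keeps the weight inside the norms and tracks its influence through constants polynomial in $\rho\tau$ rather than through exponential norm-equivalence factors. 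Your closing concern about $\rho$-dependence is the right one and your resolution is sound, with one small correction: no mesh assumption is needed to keep the effective parameter bounded, since $\rho\tau_m\leq\rho T$ holds trivially on $[0,T]$ (the growth condition $\tau_m/\tau_{m-1}\leq C$ belongs to Section~\ref{sec:post} and plays no role here), so the constant is uniform for $\rho\tau_m$ in a bounded range in either proof.
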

  \begin{proof}
    Let us define $q+1$ functionals $F_i$ using the interpolation points $t_i$ by
    \[
      T_i(\hat v)=\hat v(t_i).
    \]
    Then it holds
    \begin{align*}
      \norm{\hat v-\hat \I \hat u}{L_\rho^2([0,1])}
         =\norm{\hat w(\hat v-\hat \I \hat u)}{L^2([0,1])}
        &\leq c_1\sum_{i=0}^q |F_i(\hat w(\hat v-\hat \I \hat u))|\\
        &= c_1 \sum_{i=0}^q |F_i(\hat w(\hat v-\hat u))|\\
        &\leq c_1c_3\norm{\hat w(\hat v-\hat u)}{H^\ell([0,1])}
         \leq c_1c_3c_4\norm{\hat v-\hat u}{H_\rho^\ell([0,1])}
    \end{align*}
    where we have used the equivalence of norms in finite dimensional spaces in the first inequality,
    then the consistency of $\hat \I \hat u(t_i)=\hat u(t_i)$ and that all $T_i$ are continuous linear functionals in $H^{\ell}([0,1])$.
    The last inequality uses the product rule of differentiation with an exponential factor
    \[
      (\hat w\hat v)'=\hat w'\hat v+\hat w\hat v'=\hat w(-2\rho\tau \hat v+\hat v'),
    \]
    and similarly for higher derivatives. Together with Lemma~\ref{lem:app1} we have
    \[
      \norm{\hat u-\hat \I \hat u}{L_\rho^2([0,1])}
        \leq \norm{\hat u-\hat v}{L_\rho^2([0,1])}+\norm{\hat v-\hat \I \hat u}{L_\rho^2([0,1])}
        \leq c_2(1+c_1c_3c_4)|\hat u|_{H_\rho^\ell([0,1])}.\qedhere
    \]
  \end{proof}

  \section{Dependence of the numerical solution on $\rho$}\label{sec:deprho}
  From \cite[Theorem 1.3.6]{McGPTW20} we know that the exact solution $U$ is independent of $\rho\geq\rho_0$. 
  In this section we want to look at the dependence on $\rho$ for the discrete solutions.
  
  \begin{thm}
    Let $\rho_2>\rho_1\geq\rho_0$ and denote by $U_{h,\rho_k}^\tau\in \U$ the discrete solutions to:
    Find $U_{h,\rho_k}\in\U$ such that for all $m\in\{1,\dots,M\}$ and $\Phi\in\U$ it holds
    \[
      \Qmr[\rho_k]{(\partial_t M_0+M_1+A)U^\tau_{h,\rho_k},\Phi}
        +\scp{M_0 \jump{U^\tau_{h,\rho_k}}_{m-1}^{x_0},{\Phi}^+_{m-1}}
        =\Qmr[\rho_k]{ F,\Phi }.
    \]
    Then it holds
    \[
      \norm{U_{\rho_1}-U_{\rho_2}}{\rho_2}
        \leq C (T^{1/2}\tau^{q+1}+h^{k}).
    \]
  \end{thm}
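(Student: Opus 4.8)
The plan is to insert the exact solution $U$ as an intermediate object. By Theorem~\ref{thm:exist} the exact solution is one and the same function for both weights $\rho_1$ and $\rho_2$, so I would split the difference of the two discrete solutions with the triangle inequality in the $\rho_2$-norm,
\[
  \norm{U^\tau_{h,\rho_1}-U^\tau_{h,\rho_2}}{\rho_2}
  \leq \norm{U-U^\tau_{h,\rho_1}}{\rho_2}+\norm{U-U^\tau_{h,\rho_2}}{\rho_2}.
\]
The second summand is exactly the discretisation error of the $\rho_2$-scheme measured in its own weighted norm, so it is controlled directly by Theorem~\ref{thm:fullcts} applied with $\rho=\rho_2$, giving the bound $C(T^{1/2}\tau^{q+1}+h^{k})$.

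For the first summand the idea is to compare the two weighted norms on the \emph{bounded} interval $[0,T]$. Since $t\geq 0$ there and $\rho_2>\rho_1$, the pointwise estimate $\e^{-2\rho_2 t}\leq\e^{-2\rho_1 t}$ holds for every $t\in[0,T]$, and integrating against $|f(t)|_H^2$ yields $\norm{f}{\rho_2}\leq\norm{f}{\rho_1}$ for every $f$ on $[0,T]$. Taking $f=U-U^\tau_{h,\rho_1}$ and then invoking Theorem~\ref{thm:fullcts} with $\rho=\rho_1$ gives
\[
  \norm{U-U^\tau_{h,\rho_1}}{\rho_2}
  \leq\norm{U-U^\tau_{h,\rho_1}}{\rho_1}
  \leq C(T^{1/2}\tau^{q+1}+h^{k}).
\]
Adding the two contributions produces the asserted estimate.

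The only genuine subtlety I anticipate is the \emph{direction} of the norm comparison, and this is precisely why the statement is phrased in the $\rho_2$-norm rather than the $\rho_1$-norm: the inequality $\norm{\cdot}{\rho_2}\leq\norm{\cdot}{\rho_1}$ runs the right way on $[0,T]$ only when the larger weight is the target, so the weaker $\rho_1$-error estimate can be pushed into the stronger $\rho_2$-norm but not conversely. A secondary point to verify is that the regularity hypotheses of Theorem~\ref{thm:cts} hold simultaneously for $\rho_1$ and $\rho_2$; this is immediate, since on the bounded interval $[0,T]$ the weights $\e^{-2\rho t}$ are bounded above and below, so all weighted Sobolev norms entering Theorem~\ref{thm:fullcts} are mutually equivalent for $\rho_1$ and $\rho_2$ and the error constants may be taken uniform in the chosen range.
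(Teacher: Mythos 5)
Your proposal is correct and is essentially the paper's own argument: insert the ($\rho$-independent) exact solution $U$, split with the triangle inequality, bound each term by the convergence estimate for its own weight, and use the norm comparison $\norm{\cdot}{\rho_2}\leq\norm{\cdot}{\rho_1}$ on $[0,T]$ for $\rho_2>\rho_1$ (the paper states this as $\norm{V}{\rho_2}\leq\max\{\e^{(\rho_1-\rho_2)T},1\}\norm{V}{\rho_1}$). If anything, you are slightly more precise than the paper in citing the full weighted-norm estimate of Theorem~\ref{thm:fullcts} rather than Theorem~\ref{thm:cts}, whose bound is stated in the discrete quadrature norm.
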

  \begin{proof}
    This is a direct consequence of Theorem~\ref{thm:cts}:
    \[
      \norm{U_{\rho_1}-U_{\rho_2}}{\rho_2}
        \leq \norm{U_{\rho_1}-U}{\rho_1}+\norm{U_{\rho_2}-U}{\rho_2}
        \leq C (T^{1/2}\tau^{q+1}+h^{k}),
    \]
    due to 
    \[
      \norm{V}{\rho_2}\leq \max\{\e^{(\rho_1-\rho_2)T},1\}\norm{V}{\rho_1}
    \]
    on finite time intervals and $\rho_2>\rho_1$.
  \end{proof}

  Thus, even if the approximations depend on the choice of $\rho$, the difference converges as fast
  as the approximations themselves.  
  Let us take a look at two examples, both with known exact solutions, and compute their numerical solutions. 
  The first is taken from \cite{FrTW16}.
  
  \textbf{Example 1}: 
  Let $\Omega=\left(-\frac{3\pi}{2},\frac{3\pi}{2}\right)\subset\R$,
  $\Omega_{\mathrm{hyp}}= \left(-\frac{3\pi}{2},0\right)$, 
  $\Omega_{\mathrm{par}}= \left(0,\frac{3\pi}{2}\right)$. 
  The problem is given on $\R\times\Omega$ by
    \[
      \left( 
      \partial_t
      \begin{pmatrix}[c]
        1 & 0\\
        0 & \id{\Omega_\mathrm{hyp}}
      \end{pmatrix}
      +\begin{pmatrix}[c]
        0 & 0\\
        0 & \id{\Omega_\mathrm{par}}
      \end{pmatrix}
      +\begin{pmatrix}[c]
        0 & \partial_x\\
        \partial_x^\circ & 0
      \end{pmatrix}
      \right)\begin{pmatrix}
        u\\
        v
      \end{pmatrix}
      =\begin{pmatrix}
        f\\
        g
      \end{pmatrix}
    \]
    with $u(t,-\frac{3\pi}{2})=u(t,\frac{3\pi}{2})=0$ and right hand-sides $f,\,g$, such that
    the exact solutions are
    \begin{align*}
      u(t,x) &= \id{\R_{\geq0}}(t)(\e^t-1)
                  \left( \id{\left(\frac{\pi}{2},\frac{3\pi}{2}\right)}(x)
                  -\id{\left(-\frac{3\pi}{2},\frac{\pi}{2}\right)}(x)\right)\cos(x),\\
      v(t,x) &= \id{\R_{\geq0}}(t)
                  \left(
                    \!-(\e^t-t-1)\id{\left( -\frac{3\pi}{2},0\right) }(x)\sin(x)
                    \!+\!\id{(0,\pi)}(x)x
                    +\id{\left(\pi,\frac{3\pi}{2}\right)}(x)(2\pi-x)
                  \right)\!.
    \end{align*}    
  Note that $u$ and $v$ are non-differentiable, but piecewise smooth. In turn
  the right-hand sides $f$ and $g$ are only in $L^2$ in space.
  Figure~\ref{fig:prob1}
  \begin{figure}[tb]
    \begin{center}
%
%
\begin{tikzpicture}

\begin{axis}[%
width=0.6\textwidth,
height=0.15\textwidth,
scale only axis,
xmin=-5,
xmax=5,
ymin=-5,
ymax=5,
xtick={-4.71,-3.14,-1.57,0,1.57,3.14,4.71},
xticklabels={},
axis background/.style={fill=white},
legend style={at={(0.97,0.03)},anchor=south east,legend cell align=left,align=left,draw=black}
]
\addplot [color=red,thick]
  table[row sep=crcr]{%
-4.7124	0\\
-4.5553	0.2688\\
-4.3982	0.53098\\
-4.2412	0.78008\\
-4.0841	1.01\\
-3.927	1.215\\
-3.7699	1.3901\\
-3.6128	1.531\\
-3.4558	1.6342\\
-3.2987	1.6971\\
-3.1416	1.7183\\
-2.9845	1.6971\\
-2.8274	1.6342\\
-2.6704	1.531\\
-2.5133	1.3901\\
-2.3562	1.215\\
-2.1991	1.01\\
-2.042	0.78008\\
-1.885	0.53098\\
-1.7279	0.2688\\
-1.5708	0\\
-1.4137	-0.2688\\
-1.2566	-0.53098\\
-1.0996	-0.78008\\
-0.94248	-1.01\\
-0.7854	-1.215\\
-0.62832	-1.3901\\
-0.47124	-1.531\\
-0.31416	-1.6342\\
-0.15708	-1.6971\\
0	-1.7183\\
0.15708	-1.6971\\
0.31416	-1.6342\\
0.47124	-1.531\\
0.62832	-1.3901\\
0.7854	-1.215\\
0.94248	-1.01\\
1.0996	-0.78008\\
1.2566	-0.53098\\
1.4137	-0.2688\\
1.5708	0\\
1.7279	-0.2688\\
1.885	-0.53098\\
2.042	-0.78008\\
2.1991	-1.01\\
2.3562	-1.215\\
2.5133	-1.3901\\
2.6704	-1.531\\
2.8274	-1.6342\\
2.9845	-1.6971\\
3.1416	-1.7183\\
3.2987	-1.6971\\
3.4558	-1.6342\\
3.6128	-1.531\\
3.7699	-1.3901\\
3.927	-1.215\\
4.0841	-1.01\\
4.2412	-0.78008\\
4.3982	-0.53098\\
4.5553	-0.2688\\
4.7124	0\\
};
\addlegendentry{$u$};

\addplot [color=blue,thick, dashed]
  table[row sep=crcr]{%
-4.7124	-4.3891\\
-4.5553	-4.335\\
-4.3982	-4.1742\\
-4.2412	-3.9107\\
-4.0841	-3.5508\\
-3.927	-3.1035\\
-3.7699	-2.5798\\
-3.6128	-1.9926\\
-3.4558	-1.3563\\
-3.2987	-0.6866\\
-3.1416	0\\
-2.9845	0.6866\\
-2.8274	1.3563\\
-2.6704	1.9926\\
-2.5133	2.5798\\
-2.3562	3.1035\\
-2.1991	3.5508\\
-2.042	3.9107\\
-1.885	4.1742\\
-1.7279	4.335\\
-1.5708	4.3891\\
-1.4137	4.335\\
-1.2566	4.1742\\
-1.0996	3.9107\\
-0.94248	3.5508\\
-0.7854	3.1035\\
-0.62832	2.5798\\
-0.47124	1.9926\\
-0.31416	1.3563\\
-0.15708	0.6866\\
0	0\\
3.1416	3.1416\\
4.7124	1.5708\\
};
\addlegendentry{$v$};
\end{axis}
\end{tikzpicture}
%
%
\begin{tikzpicture}

\begin{axis}[%
width=0.6\textwidth,
height=0.15\textwidth,
scale only axis,
xmin=-5,
xmax=5,
ymin=-5,
ymax=4,
xtick={-4.71,-3.14,-1.57,0,1.57,3.14,4.71},
xticklabels={$-\frac{3\pi}{2}$,$-\pi$,$-\frac{\pi}{2}$,0,$\frac{\pi}{2}$,$\pi$,$\frac{3\pi}{2}$},
axis background/.style={fill=white},
legend style={at={(0.03,0.03)},anchor=south west,legend cell align=left,align=left,draw=black}
]
\addplot [color=red, thick]
  table[row sep=crcr]{%
-4.7124	6.3129e-16\\
-4.5553	0.5376\\
-4.3982	1.062\\
-4.2412	1.5602\\
-4.0841	2.02\\
-3.927	2.43\\
-3.7699	2.7802\\
-3.6128	3.062\\
-3.4558	3.2684\\
-3.2987	3.3943\\
-3.1416	3.4366\\
-2.9845	3.3943\\
-2.8274	3.2684\\
-2.6704	3.062\\
-2.5133	2.7802\\
-2.3562	2.43\\
-2.1991	2.02\\
-2.042	1.5602\\
-1.885	1.062\\
-1.7279	0.5376\\
-1.5708	-2.1043e-16\\
-1.4137	-0.5376\\
-1.2566	-1.062\\
-1.0996	-1.5602\\
-0.94248	-2.02\\
-0.7854	-2.43\\
-0.62832	-2.7802\\
-0.47124	-3.062\\
-0.31416	-3.2684\\
-0.15708	-3.3943\\
0 -3.4366\\
};
\addlegendentry{$f$};
\addplot [color=red, thick, forget plot]
  table[row sep=crcr]{%
0	-1.7183\\
0.15708	-1.6848\\
0.31416	-1.5852\\
0.47124	-1.422\\
0.62832	-1.1991\\
0.7854	-0.92212\\
0.94248	-0.59777\\
1.0996	-0.23407\\
1.2566	0.16\\
1.4137	0.57477\\
1.5708	1\\
1.7279	0.57477\\
1.885	0.16\\
2.042	-0.23407\\
2.1991	-0.59777\\
2.3562	-0.92212\\
2.5133	-1.1991\\
2.6704	-1.422\\
2.8274	-1.5852\\
2.9845	-1.6848\\
3.1416	-1.7183\\
};
\addplot [color=red, thick, forget plot]
  table[row sep=crcr]{%
3.1416	-3.7183\\
3.2987	-3.6848\\
3.4558	-3.5852\\
3.6128	-3.422\\
3.7699	-3.1991\\
3.927	-2.9221\\
4.0841	-2.5978\\
4.2412	-2.2341\\
4.3982	-1.84\\
4.5553	-1.4252\\
4.7124	-1\\
};

\addplot [color=blue, thick, dashed]
  table[row sep=crcr]{%
-4.7124	0\\
0	0\\
0.15708	0.42588\\
0.31416	0.84514\\
0.47124	1.2513\\
0.62832	1.6383\\
0.7854	2.0004\\
0.94248	2.3326\\
1.0996	2.6306\\
1.2566	2.8908\\
1.4137	3.1108\\
1.5708	3.2890\\
};
\addlegendentry{$g$};
\addplot [color=blue, thick, dashed, forget plot]
  table[row sep=crcr]{%
1.5708	-0.14749\\
1.7279	0.030749\\
1.885	0.25077\\
2.042	0.51103\\
2.1991	0.809\\
2.3562	1.1412\\
2.5133	1.5033\\
2.6704	1.8903\\
2.8274	2.2965\\
2.9845	2.7157\\
3.1416	3.1416\\
3.2987	3.2533\\
3.4558	3.3584\\
3.6128	3.4504\\
3.7699	3.5233\\
3.927	3.5712\\
4.0841	3.5892\\
4.2412	3.573\\
4.3982	3.5191\\
4.5553	3.425\\
4.7124	3.2891\\
};
\end{axis}
\end{tikzpicture}%
    \end{center}
    \caption{Solutions $u,\,v$ (top) and right-hand sides $f,\,g$ (bottom)
             of Example 1 at $t=1$\label{fig:prob1}}
  \end{figure}
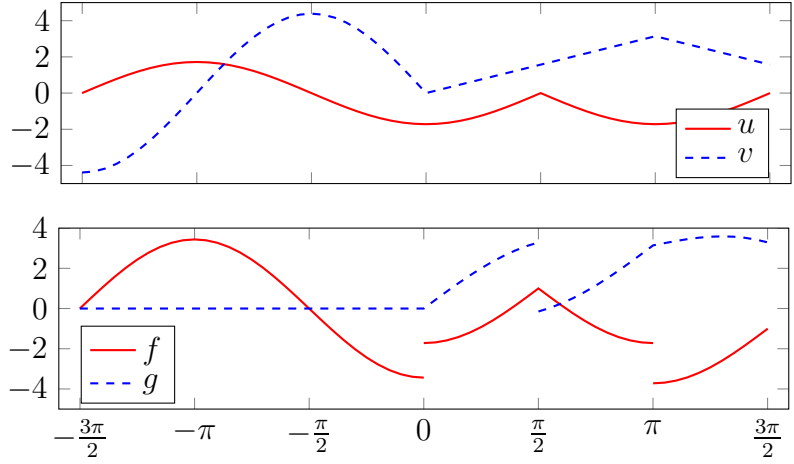
  shows the solutions and the right-hand sides for $t=1$. The value of $\rho_0$ can be any positive number.
  
  \textbf{Example 2}: 
  Let $\Omega=(-1,1)^2\subset\R^2$, $\Omega_{\mathrm{hyp}}=(-1,0)\times(-1,1)$, $\Omega_{\mathrm{ell}}=(0,1)\times(-1,1)$.
  The problem is given on $\R\times\Omega$ by
  \begin{gather}\label{eq:prob3_pde}
    \left( 
    \partial_t
    \begin{pmatrix}[c]
      \id{\Omega_{\mathrm{hyp}}} & 0\\
      0 & \id{\Omega_{\mathrm{hyp}}}
    \end{pmatrix}
    +\begin{pmatrix}[c]
      \id{\Omega_{\mathrm{ell}}} & 0\\
      0 & \id{\Omega_{\mathrm{ell}}}
    \end{pmatrix}
    +\begin{pmatrix}[c]
      0 & \Div \\
      \Grad^\circ & 0
    \end{pmatrix}
    \right)\begin{pmatrix}
      u\\
      v
    \end{pmatrix}
    =\begin{pmatrix}
      f\\
      g
    \end{pmatrix}
  \end{gather}
  with $u=0$ on $\pt\Omega$,
  where $f,g$ are given, such that the exact solution is
  \begin{align*}
    u(t,x,y) &=\id{\R_{\geq0}}(t)(\e^t-1)
                 \left(\id{x<0}\cos(\pi x/2)+\id{x>0}(1-x)\right)(1-|y|)\\
    v(t,x,y) &=\id{\R_{\geq0}}(t)(\e^t-t-1)
                 \big( \id{x<0,y<0}v_{00}(x,y)+\id{x<0,y>0}v_{01}(x,y)\\&\hspace{4cm}
                      +\id{x>0,y<0}v_{10}(x,y)+\id{x>0,y>0}v_{11}(x,y)\big)\\
    v_{00}(x,y) &= \pmtrx{[c]\cos(\pi x/2)y\\\sin(\pi y/2)},\quad 
    v_{01}(x,y)  = \pmtrx{[c]x+y\\-y^2},\\
    v_{10}(x,y) &= \pmtrx{[c]x^2+y\\0},\quad 
    v_{11}(x,y)  = \pmtrx{[c]y(1+x)\\0}.
  \end{align*}

  Figure~\ref{fig:prob2}
%
  \begin{figure}[tb]
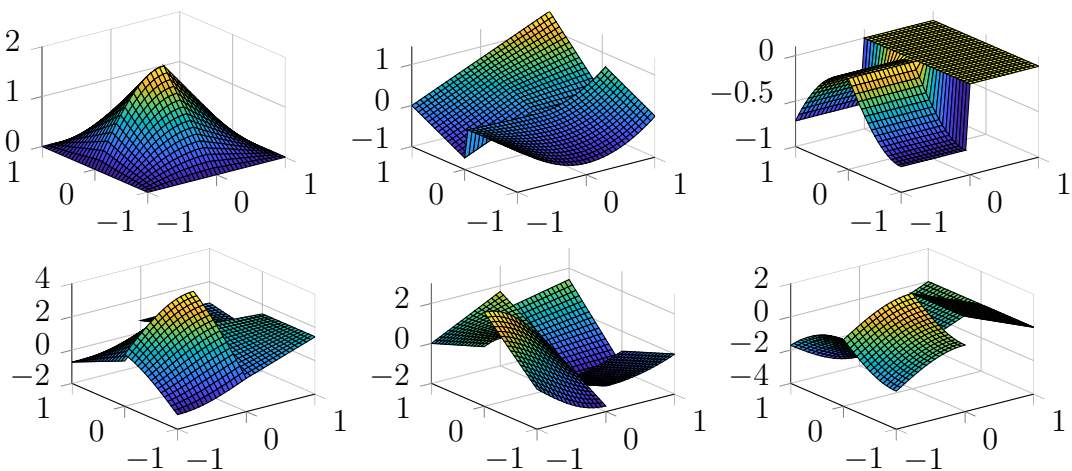

    \begin{center}
       \input{u_prob2}%
       \input{v1_prob2}%
       \input{v2_prob2}\\
       \input{f_prob2}%
       \input{g1_prob2}%
       \input{g2_prob2}
    \end{center}
    \caption{Solutions $u$ and the two components of $v$ (top), and right-hand sides $f$ and the two components of $g$ (bottom)
             of Example 2 at $t=1$\label{fig:prob2}}
  \end{figure}
  shows the solutions and the right-hand sides for $t=1$. Note that the two components of $v$ are not independent of each other
  and together form a function in $H(\Div,\Omega)$. The right-hand sides are only in $L^2$ in space and again we have $\rho_0>0$.

  The computations are done in Matlab using the finite-element framework $\mathbb{SOFE}$, 
  see \texttt{github.com/SOFE-Developers/SOFE}. 
  For our simulations we choose $T=1$, $\rho_1=1$ and $\rho_2=2$ --- both larger than $\rho_0$. 
  
  We set $M=N$ and choose equidistant meshes in space and time. Then $h\sim\tau$ follows.
  We also vary the polynomial degrees and in order to balance the two error contributions,
  we set the polynomial degree in time to be $q=k-1$ for $k\geq 1$. 
  Under these circumstances we can expect the convergence to be of order $q+1=k$ in space and time.
  
  Tables~\ref{tab:prob1:diffrho}
  \begin{table}[tbp]
    \caption{Convergence in weighted $L^2$ of $U_{\rho_1}$ and $U_{\rho_2}$ to $U$ and to each other for Example 1 using various polynomial degrees\label{tab:prob1:diffrho}}
    \begin{center}
      \begin{tabular}{cc|cccccc}
       $k/q$ & $M=N$ & \multicolumn{2}{c}{$\norm{U-U_{\rho_1}}{\rho_1}$} & \multicolumn{2}{c}{$\norm{U-U_{\rho_2}}{\rho_2}$} & \multicolumn{2}{c}{$\norm{U_{\rho_1}-U_{\rho_2}}{\rho_2}$}\\
        \hline
        \multirow{3}{*}{1/0} 
        &192 & 5.462e-03 &      & 3.347e-03 &      & 2.475e-03 &     \\
        &384 & 2.730e-03 & 1.00 & 1.675e-03 & 1.00 & 1.238e-03 & 1.00\\
        &768 & 1.364e-03 & 1.00 & 8.376e-04 & 1.00 & 6.192e-04 & 1.00\\
        \hline
        \multirow{3}{*}{2/1} 
        &192 & 1.660e-05 &      & 8.703e-06 &      & 1.139e-08 &     \\
        &384 & 4.150e-06 & 2.00 & 2.176e-06 & 2.00 & 1.424e-09 & 3.00\\
        &768 & 1.037e-06 & 2.00 & 5.440e-07 & 2.00 & 1.780e-10 & 3.00\\
        \hline
        \multirow{4}{*}{3/2} 
       &  96 & 2.233e-08 &      & 1.291e-08 &      & 7.652e-11 &     \\
       & 192 & 2.113e-09 & 3.40 & 1.319e-09 & 3.29 & 4.783e-12 & 4.00\\
       & 384 & 2.384e-10 & 3.15 & 1.543e-10 & 3.10 & 3.002e-13 & 3.99\\
       & 768 & 2.893e-11 & 3.04 & 1.894e-11 & 3.03 & 8.106e-14 & 1.89
      \end{tabular}
    \end{center}
  \end{table}
  and~\ref{tab:prob2:diffrho}
  \begin{table}[tbp]
    \caption{Convergence in weighted $L^2$ of $U_{\rho_1}$ and $U_{\rho_2}$ to $U$ and to each other for Example 2 using various polynomial degrees\label{tab:prob2:diffrho}}
    \begin{center}
      \begin{tabular}{cc|cccccc}
        $k/q$ & $M=N$ & \multicolumn{2}{c}{$\norm{U-U_{\rho_1}}{\rho_1}$} & \multicolumn{2}{c}{$\norm{U-U_{\rho_2}}{\rho_2}$} & \multicolumn{2}{c}{$\norm{U_{\rho_1}-U_{\rho_2}}{\rho_2}$}\\
        \hline
        \multirow{3}{*}{1/0} 
        &16 & 3.614e-02 &      & 1.935e-02 &      & 7.863e-03 &     \\
        &32 & 1.789e-02 & 1.01 & 9.583e-03 & 1.01 & 3.926e-03 & 1.00\\
        &64 & 8.902e-03 & 1.01 & 4.771e-03 & 1.01 & 1.965e-03 & 1.00\\
        \hline
        \multirow{3}{*}{2/1} 
       & 32 & 1.051e-04 &      & 6.583e-05 &      & 1.299e-06 &     \\
       & 64 & 2.626e-05 & 2.00 & 1.646e-05 & 2.00 & 1.622e-07 & 3.00\\
       &128 & 6.563e-06 & 2.00 & 4.117e-06 & 2.00 & 2.026e-08 & 3.00\\
        \hline
        \multirow{3}{*}{3/2} 
        &16 & 1.932e-06 &      & 1.258e-06 &      & 5.247e-08 &     \\
        &32 & 2.393e-07 & 3.01 & 1.569e-07 & 3.00 & 3.273e-09 & 4.00\\
        &64 & 2.985e-08 & 3.00 & 1.960e-08 & 3.00 & 2.043e-10 & 4.00
      \end{tabular}
    \end{center}
  \end{table}
  show the proposed orders of convergence $q+1$ in all cases. For $q\geq 1$ there is an interesting phenomenon,
  where the difference $U_{\rho_1}-U_{\rho_2}$ converges with one order higher than theoretically predicted.
  For that we do not have a proof.
  
  
  \section{A reformulation of the problem and its numerical approximation}\label{sec:reform}
  One problem in the implementation of the method \eqref{eq:dG} is the choice of the weighted
  Gauß-Radau quadrature rule. While this is a feasible task, see \cite{PTVF07,Waurick16}, 
  the weights and quadrature points depend on $\rho$ and $\tau_m$.
  Thus, on a non-equidistant mesh they have to be computed separately for each interval $I_m$.
  But we can actually reformulate the given problem in such a way that no weighted space is needed for its solution existence 
  and thus no weighted quadrature rule. 
  For that let 
  \[
    V(t):=\e^{-\rho t}U(t)
    \lrarrow
    U(t)=\e^{\rho t}V(t)
  \]
  be the transformed solution. Now $V$ is the solution to the problem
  \[
    (\pt_t M_0+(\rho M_0+M_1)+A)V(t)=\e^{-\rho t}F(t)=:\tilde F(t).
  \]
  This problem has the same structure as the original problem with a modified linear, 
  self-adjoint operator $\rho M_0+M_1$, which is positive by design for $\rho\geq \rho_0$.
  But this in turn means, that we can use the standard $L^2$-space
  \[
    H(\R;H):= 
      \left\{ 
          f:\R\to H\,:\, f \mbox{ meas.}, 
          \int_\R |f(t)|_H^2 \dt<\infty
      \right\}
  \]
  as solution space. Then by Theorem \ref{thm:exist} the uniqueness of a solution $V\in H(\R;H)$ for all $\tilde F\in H(\R;H)$ follows.
  Note that because of the connection between $U$ and $V$ and especially the definition of $\tilde F$, $V$ depends on the choice of $\rho\geq\rho_0$.
  
  We can use a similar numerical method as before by choosing a standard right-sided Gauß-Radau quadrature
  rule $\Qm{\cdot}:=\Qmr[0]{\cdot}$ and the associated discretised scalar product. So the discrete method reads: Find $V_h^\tau\in\U$
  such that for all $\Phi\in\U$ and $m\in\{1,\dots,M\}$ it holds
  \[
    \Qm{(\pt_t M_0+(\rho M_0+M_1)+A)V_h^\tau,\Phi}+\scp{M_0\jump{V_h^\tau}_{m-1}^{x_0},\Phi_{m-1}^+}=\Qm{\tilde F,\Phi}.
  \]
  Similar to the results of Section~\ref{sec:basics} we have the following convergence result.
  \begin{thm}
    We assume for the solution $V(t)=\e^{-\rho t}U(t)$ the 
    regularity     
    \[
      V\in H^{1}(\R;H^k(\Omega))\cap 
      H^{q+3}(\R;L^2(\Omega)) 
    \]
    as well as 
    \[
      AV\in H(\R; H^k(\Omega)).
    \]
    Then we have for the error of the numerical solution
    \[
      \sup_{t\in[0,T]}\norm{M_0^{1/2}(V-V_h^\tau)(t)}{H}+
      \norm{V-V_h^\tau}{0}
      \leq C (T^{1/2}\tau^{q+1} + h^{k}).
    \]
  \end{thm}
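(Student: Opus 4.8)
The plan is to observe that the transformed problem for $V$ has exactly the same structure as the original problem \eqref{eq:problem}, with the self-adjoint operator $M_1$ replaced by $\rho M_0 + M_1$ and with the unweighted inner product (i.e.\ $\rho = 0$ in all the weighted constructions) replacing the $\rho$-weighted one. The crucial point is that the positivity assumption needed for the general theory still holds: we require a constant $\gamma > 0$ with $\tilde\rho M_0 + (\rho M_0 + M_1) \geq \gamma$ for all $\tilde\rho \geq 0$. Since by hypothesis $\rho M_0 + M_1 \geq \gamma$ for the chosen $\rho \geq \rho_0$, and $M_0 \geq 0$ (as $M_0^{1/2}$ exists), this holds already with $\tilde\rho = 0$. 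Hence the discrete method stated for $V_h^\tau$ is precisely the method \eqref{eq:dG} applied to the reformulated problem in the special case of weight parameter $0$.

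First I would make this correspondence explicit: set $\tilde M_1 := \rho M_0 + M_1$, which is bounded, linear, and self-adjoint, and note that the unweighted space $H(\R;H)$ is the space $H_{\tilde\rho}(\R;H)$ with $\tilde\rho = 0$, the unweighted quadrature rule $\Qm{\cdot} = \Qmr[0]{\cdot}$ is the Gau\ss-Radau rule of Section~\ref{sec:basics} at $\tilde\rho = 0$, and the associated norms $\norm{\cdot}{Q,0}$ and $\norm{\cdot}{0}$ are the corresponding discrete and continuous norms. With this dictionary in place, the discrete variational problem for $V_h^\tau$ is literally an instance of \eqref{eq:dG} for the operator $\pt_t M_0 + \tilde M_1 + A$ with right-hand side $\tilde F$ and weight $0$.

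Next I would verify that the regularity hypotheses of Theorem~\ref{thm:cts} (and hence of Theorem~\ref{thm:fullcts}) are met. The assumptions placed on $V$ here, namely $V \in H^1(\R;H^k(\Omega)) \cap H^{q+3}(\R;L^2(\Omega))$ and $AV \in H(\R;H^k(\Omega))$, are exactly the $\tilde\rho = 0$ versions of the regularity required in Theorem~\ref{thm:cts}, since $H^j_0(\R;X) = H^j(\R;X)$. Then I would simply invoke Theorem~\ref{thm:cts} with $\tilde\rho = 0$ to obtain the bound on $\sup_{t}\norm{M_0^{1/2}(V-V_h^\tau)(t)}{H} + \norm{V-V_h^\tau}{Q,0}$, and Theorem~\ref{thm:fullcts} (again at $\tilde\rho = 0$) to upgrade the discrete norm $\norm{\cdot}{Q,0}$ to the full continuous norm $\norm{V - V_h^\tau}{0}$. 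The factor $\e^{\rho T}$ appearing in Theorem~\ref{thm:cts} collapses to $\e^{0\cdot T} = 1$, which is why the stated estimate has no exponential prefactor.

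The only genuine issue to check is that the general solution theory and the convergence analysis of Section~\ref{sec:basics} really do permit the degenerate value $\tilde\rho = 0$; the original framework quantifies over $\rho \geq \rho_0$ with $\rho_0$ possibly positive, whereas here the reformulation buys us strict positivity of $\tilde M_1$ uniformly down to $\tilde\rho = 0$. I expect this to be the main (and essentially only) obstacle, and it is resolved by the positivity observation above: because $\tilde M_1 = \rho M_0 + M_1 \geq \gamma > 0$, the coercivity constant used throughout the proofs of Theorems~\ref{thm:cts} and~\ref{thm:fullcts} is available with $\tilde\rho = 0$, so every estimate in those proofs goes through verbatim. I would state this as a one-line remark and then conclude that the theorem follows directly from Theorems~\ref{thm:cts} and~\ref{thm:fullcts} applied to the transformed problem.
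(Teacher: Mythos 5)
Your proposal is correct and is exactly the argument the paper intends: the paper gives no separate proof of this theorem, relying on the observation (stated in its introduction and in the sentence preceding the theorem) that the result ``follows from the general setting,'' i.e.\ Theorems~\ref{thm:cts} and~\ref{thm:fullcts} applied to the transformed problem with $\tilde M_1 = \rho M_0 + M_1$ and weight parameter $0$. Your explicit check that $\tilde\rho M_0 + (\rho M_0 + M_1) \geq \gamma$ for all $\tilde\rho \geq 0$ (using $M_0 \geq 0$), so that the framework applies with $\tilde\rho_0 = 0$ and the prefactor $\e^{\rho T}$ collapses to $1$, is precisely the point the paper leaves implicit.
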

  \begin{rem}
    An approximation to the original problem is then given by
    \[
      E_\rho V_h^\tau(t):=\e^{\rho t}V_h^\tau(t)
    \]
    with the estimation
    \[
      \norm{E_\rho V_h^\tau - U}{0}
        = \norm{\e^{\rho t}(V_h^\tau - V)}{0}
        \leq C \e^{\rho T}(T^{1/2}\tau^{q+1}+h^{k}),
    \]
    comparable to the result of Theorem~\ref{thm:cts}. 
    Note that $E_\rho V_h^\tau$ is no longer piecewise polynomial in time, but
    it holds in comparison to the piecewise polynomial solution $U_h^\tau$ from the previous section.
    \begin{align*}
      \norm{E_\rho V_h^\tau-U_h^\tau}{0}
        &\leq \norm{E_\rho V_h^\tau-U}{0}+\e^{\rho T}\norm{U_h^\tau-U}{\rho}
         \leq C\e^{\rho T}(T^{1/2}\tau^{q+1}+h^{k}).
    \end{align*}
  \end{rem}
  \begin{rem}    
    In general $V_{h,\rho}^\tau:=V_h^\tau$ and also $E_\rho V_{h,\rho}^\tau$ will depend on $\rho$. 
    Furthermore, for different values of $\rho$ the discrete solutions $V_{h,\rho_1}^\tau$ and $V_{h,\rho2}^\tau$
    will in general not converge to each other, since they solve two different problems, but
    \begin{align*}
      \norm{E_{\rho_1} V_{h,\rho_1}^\tau-E_{\rho_2} V_{h,\rho_2}^\tau}{0}
        &\leq \norm{E_{\rho_1} V_{h,\rho_1}^\tau-U}{0}+\norm{E_{\rho_2} V_{h,\rho_2}^\tau-U}{0}\\
        &\leq C \e^{\max\{\rho_1,\rho_2\} T}(T^{1/2}\tau^{q+1}+h^{k}).
    \end{align*}    
  \end{rem}
  
  For our numerical simulations we use again the given two examples from before and $\rho=1$ or $\rho=2$. 
  Tables~\ref{tab:prob1:mod}
   \begin{table}[tbp]
    \caption{Convergence in $L^2$ of $V_h^\tau$ to $V$, $E_\rho V_h^\tau$ to $U$ and comparison of $E_\rho V_h^\tau$ to $U_h^\tau$ for Example 1 using various polynomial degrees and $\rho=2$\label{tab:prob1:mod}}
    \begin{center}
      \begin{tabular}{cc|cccccc}
        $k/q$ & $M=N$ & \multicolumn{2}{c}{$\norm{V-V_h^\tau}{0}$} & \multicolumn{2}{c}{$\norm{U-E_\rho V_h^\tau}{0}$} & \multicolumn{2}{c}{$\norm{E_\rho V_h^\tau-U_h^\tau}{0}$}\\
        \hline
        \multirow{3}{*}{1/0} 
       & 192 &  1.319e-02 &      &  2.691e-02 &       & 2.798e-02 & \\
       & 384 &  6.601e-03 & 1.00 &  1.348e-02 & 1.00  & 1.402e-02 & 1.00\\
       & 768 &  3.302e-03 & 1.00 &  6.747e-03 & 1.00  & 7.014e-03 & 1.00\\
        \hline
        \multirow{3}{*}{2/1} 
       & 192 &  1.728e-05 &      &  4.550e-05 &       & 3.283e-05 & \\
       & 384 &  4.321e-06 & 2.00 &  1.138e-05 & 2.00  & 8.221e-06 & 2.00\\
       & 768 &  1.080e-06 & 2.00 &  2.846e-06 & 2.00  & 2.057e-06 & 2.00\\
        \hline
        \multirow{3}{*}{3/2} 
       & 192 &  1.317e-08 &      &  2.620e-08 &       & 2.549e-08 & \\
       & 384 &  1.645e-09 & 3.00 &  3.271e-09 & 3.00  & 3.191e-09 & 3.00\\
       & 768 &  2.056e-10 & 3.00 &  4.089e-10 & 3.00  & 3.991e-10 & 3.00
      \end{tabular}
    \end{center}
  \end{table}
  and~\ref{tab:prob2:mod}
   \begin{table}[tbp]
    \caption{Convergence in $L^2$ of $V_h^\tau$ to $V$, $E_\rho V_h^\tau$ to $U$ and comparison of $E_\rho V_h^\tau$ to $U_h^\tau$ for Example 2 using various polynomial degrees and $\rho=1$\label{tab:prob2:mod}}
    \begin{center}
      \begin{tabular}{cc|cccccc}
        $k/q$ & $M=N$ & \multicolumn{2}{c}{$\norm{V-V_h^\tau}{0}$} & \multicolumn{2}{c}{$\norm{U-E_\rho V_h^\tau}{0}$} & \multicolumn{2}{c}{$\norm{E_\rho V_h^\tau-U_h^\tau}{0}$}\\
        \hline
        \multirow{3}{*}{1/0}
       & 16 &  2.637e-02 &       & 4.900e-02 &      &  2.641e-02 & \\
       & 32 &  1.322e-02 & 1.00  & 2.471e-02 & 0.99 &  1.294e-02 & 1.03\\
       & 64 &  6.621e-03 & 1.00  & 1.241e-02 & 0.99 &  6.399e-03 & 1.02\\
        \hline
        \multirow{3}{*}{2/1}
       & 16 &  2.617e-04 &      &  4.758e-04 &      &  6.818e-04 & \\
       & 32 &  6.562e-05 & 2.00 &  1.193e-04 & 2.00 &  1.716e-04 & 1.99\\
       & 64 &  1.643e-05 & 2.00 &  2.988e-05 & 2.00 &  4.304e-05 & 2.00\\
       \hline
        \multirow{3}{*}{3/2}
      & 16 &  1.962e-06 &      &  2.752e-06 &       & 5.406e-06 & \\
      & 32 &  2.440e-07 & 3.01 &  3.400e-07 & 3.02  & 6.818e-07 & 2.99\\
      & 64 &  3.048e-08 & 3.00 &  4.249e-08 & 3.00  & 8.560e-08 & 2.99\\
      \end{tabular}
    \end{center}
  \end{table}
  show for the two numerical examples the expected convergence rates for $V_h^\tau$ converging to $V$ and $E_\rho V_h^\tau$ converging to $U$.
  We also compare the approximations $U_h^\tau$ from the previous section and $E_\rho V_h^\tau$. 
  Both and also their difference converge with the same order. So the two approximations are different but equally good.

  \section{Postprocessing and improvement of accuracy}\label{sec:post}
  The numerical solution $V_h^\tau\in\U$ is discontinuous at the discrete times $t_{m-1}$, $m\in\{1,\dots,M\}$.
  For discontinuous Galerkin methods it is known, see \cite{ES16}, that a simple post-processing using the jump at these time-points
  can give an improved approximation. This only makes sense for problems, where the solution is at least continuous in time.
  Under stronger conditions on $F$ we can show that the solution $U$ of the original problem is continuous in time and satisfies
  $U(0^+)=x_0$, see~\cite{Fr19}. To be more precise, we assume
  $F|_{\R_{\geq 0}}$ is continuous, $F(t)=0$, $t<0$, $x_0\in\text{dom}(A)$,
  and
  \[
    (\rho M_0+M_1+A)x_0=F(0^+).
  \]
  
  Now let the globally discontinuous function $\theta$ with $\theta_m\in\PS_{q+1}(I_m)$ be given on any time-interval $I_m$ by
  the conditions $\theta_m(\tmi)=0$, $i\in\{0,\dots,q\}$ and $\theta_m(t_{m-1})=1$. 
  
  Then we define the post-processed solution
  \[
    \tilde V_h^\tau|_{I_m}:=V_h^\tau-\jump{V_h^\tau}_{m-1}^{x_0}\theta_m
  \]
  that lies in the space
  \[
    \V:=\left\{
          V\in H^1(\R,H):
          V|_{I_m}\in\PS_{q+1}(I_m,V_k(\Omega)),
          m\in\{1,\dots,M\}
       \right\}.
  \]

  \begin{lem}
    The post-processed solution $\tilde V_h^\tau$ is the unique solution of:
    Find $\tilde V_h^\tau\in\V$ such that for all $m\in\{1,\dots,M\}$ and $\Phi\in\U$ it holds
    \begin{align}\label{eq:dG-C0}
      \Qm{(\pt_t M_0+(\rho M_0+M_1)+A)\tilde V_h^\tau,\Phi}&=\Qm{\tilde F,\Phi},\quad
      \jump{\tilde V_h^\tau}_{m-1}^{x_0}=0.
    \end{align}
  \end{lem}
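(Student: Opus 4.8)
The plan is to prove the statement in three stages: first, that the explicitly constructed function $\tilde V_h^\tau|_{I_m}=V_h^\tau-\jump{V_h^\tau}_{m-1}^{x_0}\theta_m$ actually lies in $\V$ and satisfies the jump conditions $\jump{\tilde V_h^\tau}_{m-1}^{x_0}=0$; second, that it satisfies the quadrature equation in \eqref{eq:dG-C0}; and third, that this coupled problem has at most one solution, so that $\tilde V_h^\tau$ is \emph{the} solution. Throughout, the decisive structural fact I would exploit is that the corrector $\theta_m$ vanishes at every Gauß-Radau node $\tmi$, $i\in\{0,\dots,q\}$, and, because the rule is right-sided, the right endpoint $t_m$ is itself a node, so $\theta_m(t_m)=0$ while $\theta_m(t_{m-1})=1$.

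First I would verify membership in $\V$ and the jump conditions by a direct nodal computation. Evaluating the definition at $t_{m-1}^+$ and using $\theta_m(t_{m-1})=1$ gives $\tilde V_h^\tau(t_{m-1}^+)=V_h^\tau(t_{m-1}^+)-\jump{V_h^\tau}_{m-1}^{x_0}$, which equals $V_h^\tau(t_{m-1}^-)$ for $m\ge 2$ and $x_0$ for $m=1$. On the preceding interval, $\theta_{m-1}(t_{m-1})=0$ because $t_{m-1}$ is the right Radau node of $I_{m-1}$, so $\tilde V_h^\tau(t_{m-1}^-)=V_h^\tau(t_{m-1}^-)$. Matching the two one-sided limits yields continuity at every $t_{m-1}$ and $\tilde V_h^\tau(t_0^+)=x_0$; hence $\tilde V_h^\tau\in\V$ and all jump conditions hold.

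The heart of the argument, and the step I expect to be the main obstacle, is the quadrature equation. Subtracting the scheme for $V_h^\tau$ from the target equation reduces the claim to showing, on each $I_m$, that $-\Qm{(\pt_t M_0+(\rho M_0+M_1)+A)(\jump{V_h^\tau}_{m-1}^{x_0}\theta_m),\Phi}=\scp{M_0\jump{V_h^\tau}_{m-1}^{x_0},\Phi^+_{m-1}}$ for all $\Phi\in\U$. Here I would split the operator: since the discrete scalar product only samples the integrand at the nodes $\tmi$ and $\theta_m$ vanishes there, the $(\rho M_0+M_1)$ and the $A$ contributions drop out identically. The surviving term is the time-derivative term $\pt_t M_0$, which produces $\theta_m'$ and does \emph{not} vanish at the nodes. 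Writing $J:=\jump{V_h^\tau}_{m-1}^{x_0}$, the integrand $\theta_m'\,\scp{M_0 J,\Phi}$ is a polynomial of degree at most $2q$, so the Gauß-Radau rule is exact and I may replace the quadrature by the exact integral over $I_m$. Integrating by parts and using $\theta_m(t_m)=0$, $\theta_m(t_{m-1})=1$, I obtain the boundary value $-\scp{M_0 J,\Phi^+_{m-1}}$ plus the remainder $\int_{I_m}\theta_m\,\scp{M_0 J,\Phi'}\dt$; the latter integrand again has degree at most $2q$, so exactness converts it back to a quadrature that vanishes because $\theta_m(\tmi)=0$. This reproduces precisely the jump penalty of the original method, which is exactly the identity needed. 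The care required lies in the bookkeeping of polynomial degrees, so that exactness to degree $2q$ applies to both the direct and the integrated-by-parts integrands, and in the correct treatment of the one-sided boundary terms.

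Finally, for uniqueness I would argue that the coupled problem is square and reduces to the well-posedness of the original scheme. Using the direct-sum decomposition $\PS_{q+1}(I_m)=\PS_q(I_m)\oplus\mathrm{span}(\theta_m)$, any candidate solution can be written locally as $\hat W+c\,\theta_m$ with $\hat W\in\U$ and $c\in V_k$, and the continuity/initial condition fixes $c$ in terms of $\hat W(t_{m-1}^+)$ and the value inherited from the previous interval. Substituting into the quadrature equation and invoking the same $\theta_m$-identity just established turns it, term by term, into the local equation \eqref{eq:dG} of the original method for $\hat W$ with the jump penalty reinstated. Marching interval by interval from $t_0$ and appealing to the unique solvability of that method, which underlies Theorem~\ref{thm:cts}, the homogeneous problem forces $\hat W=0$, hence $c=0$, so the difference of any two solutions vanishes. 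Combined with the existence of $\tilde V_h^\tau$ from the first two stages, this shows that $\tilde V_h^\tau$ is the unique solution.
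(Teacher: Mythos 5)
Your proposal is correct and follows essentially the same route as the paper, whose one-line proof (``substitute $\tilde V_h^\tau$ into the quadrature formulation and use that $V_h^\tau$ solves \eqref{eq:dG}'') is exactly your substitution argument; you merely spell out the details it elides, namely that $\theta_m$ kills the $(\rho M_0+M_1)$- and $A$-terms at the Gauß-Radau nodes, that the $\pt_t M_0$-term reproduces the jump penalty via exactness of the rule for degree $2q$ and integration by parts, and that the substitution is reversible so uniqueness reduces to that of the dG scheme. All of these checks are sound as written.
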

  \begin{proof}
    This follows directly by substituting $\tilde V_h^\tau$ in above quadrature rule and using, that $V_h^\tau$
    solves the dG-formulation \eqref{eq:dG}. 
  \end{proof}  
  The resulting problem \eqref{eq:dG-C0} is a version of a so-called dG-C0 method~\cite{MS11,BM21,BM22}. 
  \begin{rem}
    The formulation \eqref{eq:dG-C0} immediately gives the conservation of energy by the post-processed solution 
    $\tilde V_h^\tau$ for the transformed problem, compare Remark \ref{rem:energy:dG}.
  \end{rem}
  \begin{rem}
    Choosing $\Phi$ as the Lagrange-basis functions
    corresponding to the Gauß-Radau quadrature points we also have the equivalence of \eqref{eq:dG-C0} to a collocation method, see also \cite[Theorem 1.24]{Becher22}.
    \begin{align*}
      (\pt_t M_0+(\rho M_0+M_1)+A)\tilde V_h^\tau(\tmi)&=\tilde F(\tmi),\quad
      \jump{\tilde V_h^\tau}_{m-1}^{x_0}=0
    \end{align*}
    for all $m\in\{1,\dots,M\}$ and $i\in\{0,\dots,q\}$. 
  \end{rem}
  In \cite{MS11} we find a convergence result for the dG-C0-method in $L^\infty$ in time
  in the case of an ode using a discrete Gronwall lemma. We will take a different approach here and start by defining 
  two more interpolation operators, see \cite{ES16}, besides the Gauß-Radau interpolation operator $\I$ of \eqref{eq:inter:I}.
  
  The first operator $\K:H^2(\R;H)\to \V[q+2]$ is defined for $q\geq 1$ locally on $I_m$ by
  \begin{subequations}\label{eq:inter:K}
  \begin{align}
    (\K_m v-v)(t_{n-1}^+)&=0,  &(\K_m v-v)(t_n^-)&=0,\\
    \pt_t(\K_m v-v)(t_{n-1}^+)&=0, &\pt_t(\K_m v-v)(t_n^-)&=0,
  \end{align}
  \end{subequations}
  plus $q-1$ additional independent Lagrange-conditions in the interior of $I_m$. 
  Assuming $v$ is smooth enough we have by standard interpolation error estimates
  \begin{subequations}\label{eq:intertime}
  \begin{gather}
    \norm{\pt_t(v-\K v)}{L^\infty([0,T])}\leq C \tau^{q+2}\norm{v}{W^{q+3,\infty}(0,T)}.
  \end{gather}
  The second interpolation operator $\J:H^2(\R;H)\to\PS_{q+1}(\R;H)$ is defined locally by
  \[
    (\J_m v-v)(t_{m-1}^+)=0\qmbox{and}
    \pt_t(\J_m v-\K v)(\tmi)=0,\,i\in\{0,\dots,q\}.
  \]
  Note that by \cite[Lemma 5]{ES16} $\J v$ is continuous with $\J_m v(t_m^-)=v(t_m)$ and
  \begin{gather}
    \norm{v-\J v}{L^\infty([0,T])}\leq C \tau^{q+2}\norm{v}{W^{q+2,\infty}(0,T)}.
  \end{gather}
  \end{subequations}
  Also by continuity it holds for $\xi\in\PS_{q+1}^{cont}(H;\R)$ on $I_m$
  \begin{align}
    \xi &= \I_m \xi+(\xi(t_{m-1})-\I_m \xi(t_{m-1}^+))\theta_m\notag\\
        &= \I_m \xi+(\I_{m-1} \xi(t_{m-1}^-)-\I_m\xi(t_{m-1}^+))\theta_m\notag\\
        &= \I_m \xi-\jump{\I \xi}_{m-1}^{\xi(0)}\theta_m.\label{eq:J_jump_I}
  \end{align}

  \begin{thm}\label{thm:post}
    Let $q\geq 1$ and $\tau_m/\tau_{m-1}\leq C$ for all $m\in\{1,\dots,M\}$. Then the error between the post-processed solution $\tilde V^\tau_h$ and $V$
    can be estimated by
    \[
      \norm{\tilde V^\tau_h-V}{0}\leq CT^{1/2}(\tau^{q+2}+h^{k}).
    \]
  \end{thm}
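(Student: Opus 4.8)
The plan is to start from an \emph{exact} Galerkin orthogonality for the post-processed error $e:=\tilde V^\tau_h-V$ and then separate $e$ into a part controlled directly by the interpolation estimates \eqref{eq:interspace}, \eqref{eq:intertime} and a genuinely discrete part controlled by a stability argument for \eqref{eq:dG-C0}. The key structural fact is that the discretised inner product $\Qm{\cdot,\cdot}$ only evaluates its arguments at the Gauß--Radau points $\tmi$. Under the stated compatibility assumptions on $F$ and $x_0$ the transformed solution $V$ is continuous, satisfies $V(0^+)=x_0$, and solves $(\pt_t M_0+(\rho M_0+M_1)+A)V=\tilde F$ pointwise, so this identity in particular holds at every $\tmi$. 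Hence $\Qm{(\pt_t M_0+(\rho M_0+M_1)+A)V,\Phi}=\Qm{\tilde F,\Phi}$ holds exactly, with no consistency error, and subtracting \eqref{eq:dG-C0} gives for all $\Phi\in\U$ and all $m$ the orthogonality $\Qm{(\pt_t M_0+(\rho M_0+M_1)+A)e,\Phi}=0$, together with $\jump{e}_{m-1}^{0}=0$ and $e(0)=0$.

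Second, I would split $e=\xi+\eta$ using the combined space--time interpolant $\Pi V:=\J(IV)\in\V$, so that $\eta:=\Pi V-V$ and $\xi:=\tilde V^\tau_h-\Pi V\in\V$. Writing $\eta=(\J(IV)-IV)+(IV-V)$ and using that the spatial interpolation $I$ commutes with $\pt_t$ and with $A$, the first summand is a pure time-interpolation error of $IV$ bounded through \eqref{eq:intertime} by $C\tau^{q+2}$, while the second is the spatial interpolation error bounded through \eqref{eq:interspace} by $Ch^{k}$; integrating these $L^\infty$-in-time bounds over $[0,T]$ produces the factor $T^{1/2}$ and yields $\norm{\eta}{0}\le CT^{1/2}(\tau^{q+2}+h^{k})$. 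The matching bounds for $\pt_t\eta$, $A\eta$ and $(\rho M_0+M_1)\eta$ evaluated at the points $\tmi$ --- which is precisely what the orthogonality produces on the right-hand side once $e=\xi+\eta$ is inserted --- follow from the defining conditions \eqref{eq:inter:K} and \eqref{eq:intertime} of $\K$ and $\J$ (in particular $\pt_t(\J v-\K v)(\tmi)=0$) and are again of size $\tau^{q+2}+h^{k}$.

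Third, and this is the heart of the matter, I would bound $\xi\in\V$ by an energy estimate. From the orthogonality, $\xi$ satisfies $\Qm{(\pt_t M_0+(\rho M_0+M_1)+A)\xi,\Phi}=-\Qm{(\pt_t M_0+(\rho M_0+M_1)+A)\eta,\Phi}$ for all $\Phi\in\U$, whose right-hand side is controlled by $(\tau^{q+2}+h^k)\,\norm{\Phi}{Q,0,m}$ via the discrete Cauchy--Schwarz inequality \eqref{eq:discr:CSU} and the residual bounds above. I would test with $\Phi=\I\xi\in\U$. Because the rule only samples at the $\tmi$, where $\xi$ and $\I\xi$ agree and $\theta_m$ vanishes, the skew contribution $\Qm{A\xi,\I\xi}$ vanishes identically ($A$ skew self-adjoint) and the symmetric contribution satisfies $\Qm{(\rho M_0+M_1)\xi,\I\xi}\ge\gamma\,\norm{\I\xi}{Q,0,m}^2$ by $\rho M_0+M_1\ge\gamma$. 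The term $\Qm{M_0\pt_t\xi,\I\xi}$ has a degree-$2q$ time integrand, so the rule is exact; inserting $\I\xi=\xi+\jump{\I\xi}_{m-1}^{0}\theta_m$ from \eqref{eq:J_jump_I} splits it into the telescoping energy $\tfrac12\big(\norm{M_0^{1/2}\xi(t_m)}{H}^2-\norm{M_0^{1/2}\xi(t_{m-1})}{H}^2\big)$ and a single jump--bubble coupling $\jump{\I\xi}_{m-1}^{0}\int_{I_m}\scp{M_0\pt_t\xi,\theta_m}$. Summing over $m$ with $\xi(0)=0$, the energy telescopes and coercivity controls $\sum_m\norm{\I\xi}{Q,0,m}^2$; the right-hand side is absorbed by Young's inequality, and the jump--bubble couplings are estimated and summed using the growth condition $\tau_m/\tau_{m-1}\le C$. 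Finally $\norm{\xi}{0}$ is recovered from $\norm{\I\xi}{0}$ and the jumps through $\xi=\I\xi-\jump{\I\xi}_{m-1}^{0}\theta_m$ and $\norm{\theta_m}{L^2(I_m)}\sim\tau_m^{1/2}$, which yields $\norm{\xi}{0}\le CT^{1/2}(\tau^{q+2}+h^k)$; combining with the bound on $\eta$ completes the proof.

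I expect the main obstacle to be this last step: the trial degree $q+1$ exceeds the test degree $q$, so the natural discrete quantity $\norm{\cdot}{Q,0,m}$ is only a seminorm that annihilates the bubble $\theta_m$. Consequently the bubble component of $\xi$ is invisible to the energy and must be recovered entirely from the interface jumps $\jump{\I\xi}_{m-1}^{0}$, whose uniform control --- together with the summation of the jump--bubble couplings across intervals --- is exactly where the mesh-ratio hypothesis and a careful telescoping are indispensable; a secondary technical point is verifying the collocation-point residual bounds for $\pt_t\eta$ and $A\eta$ from the Hermite-type construction of $\K$ and $\J$.
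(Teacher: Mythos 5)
Your outline is, in all essentials, the paper's own proof: the same consistency/Galerkin orthogonality from the collocation property of \eqref{eq:dG-C0}, the same splitting through the composite interpolant $\J I V$ (your $\eta=(\J(IV)-IV)+(IV-V)$ is the paper's $\eta_\tau+\eta_h$ up to commuting $I$ with $\J$, which is legitimate since $\J$ acts only in time), the same test function $\Phi=\I\xi$, the same exploitation of quadrature exactness and of $\pt_t\J_m v(\tmi)=\pt_t\K_m v(\tmi)$ to trade $\J$ for $\K$ in the time-derivative residual, the same factor $T^{1/2}$ via $\norm{v}{Q}\leq CT^{1/2}\norm{v}{L^\infty([0,T])}$, and the same final recovery of $\norm{\xi}{0}$ from $\norm{\I\xi}{0}$ through \eqref{eq:J_jump_I}, an inverse inequality, $\norm{\theta_m}{L^2(I_m)}\leq C\tau_m^{1/2}$ and the mesh-ratio hypothesis.

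There is one local step where your plan, as written, would stall. You propose to treat the jump--bubble coupling $\scp{M_0\jump{\I\xi}_{m-1}^{0},\int_{I_m}\theta_m\,\pt_t\xi\dt}$ as a remainder ``to be estimated and summed using the growth condition''. Estimated naively (Cauchy--Schwarz plus inverse inequality on $\pt_t\xi$) it produces a term of size $\tau_m^{-1}\norm{M_0^{1/2}\jump{\I\xi}_{m-1}^{0}}{H}^2$, and in your bookkeeping the energy identity has produced \emph{no} jump term on the left-hand side to absorb it; the mesh-ratio condition does not help here. The resolution is that this coupling must be computed, not estimated: since $\theta_m(\tmi)=0$ and the integrand $\theta_m\,\pt_t\I_m\xi$ has degree $2q$, quadrature exactness gives $\int_{I_m}\theta_m\,\pt_t\I_m\xi\dt=0$, while $\int_{I_m}\theta_m\theta_m'\dt=\tfrac12\bigl(\theta_m(t_m)^2-\theta_m(t_{m-1})^2\bigr)=-\tfrac12$; hence $\int_{I_m}\theta_m\,\pt_t\xi\dt=\tfrac12\jump{\I\xi}_{m-1}^{0}$ and the coupling equals $+\tfrac12\norm{M_0^{1/2}\jump{\I\xi}_{m-1}^{0}}{H}^2$. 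It is exactly the dG jump dissipation, sign-definite, and it migrates to the left-hand side --- this is precisely what the paper's identity
\[
  \Qm{\pt_t M_0\xi,\I_m\xi}
  =\frac{1}{2}\left(\norm{M_0^{1/2}\jump{\I\xi}_{m-1}}{H}^2+\norm{M_0^{1/2}\I_m\xi(t_m^-)}{H}^2-\norm{M_0^{1/2}\I_{m-1}\xi(t_{m-1}^-)}{H}^2\right)
\]
exhibits directly, so the mesh ratio is needed only in the final passage $\norm{\xi}{0}\leq C\norm{\I\xi}{0}$, exactly where you also use it. Two minor slips: $I$ does \emph{not} commute with $A$ in general --- what is needed and available is the simultaneous approximation estimate \eqref{eq:interspace}, giving $\norm{A(\J V-I\J V)}{H}\leq Ch^k$; and $\xi(0)=x_0-IV(0)$ is an $\ord{h^k}$ quantity rather than zero, which is why the paper carries the superscript $\xi(0)$ in \eqref{eq:J_jump_I} --- harmless, but your assumption $\xi(0)=0$ is not literally true.
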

  \begin{proof}
    Since the exact solution $V$ also satisfies \eqref{eq:dG-C0}, we have Galerkin orthogonality. Together with the splitting
    \[
      \tilde V_h^\tau-V=\xi-\eta,\quad
      \xi:=\tilde V_h^\tau-\J I V,\,
      \eta:=\eta_\tau+\eta_h,\,
      \eta_\tau:=V-\J V,\,
      \eta_h:=\J V-I\J V
    \]
    into discrete and interpolation errors in time and space we obtain the error equation
    \begin{equation}\label{eq:erroreq}
      \Qm{(\pt_t M_0+(\rho M_0+M_1)+A)\eta,\Phi}
        = \Qm{(\pt_t M_0+(\rho M_0+M_1)+A)\xi,\Phi}
    \end{equation}
    for all $\Phi\in\U$. If we choose $\Phi=\I\xi$, we can rewrite the right-hand side as
    \begin{multline*}
      \Qm{(\pt_t M_0+(\rho M_0+M_1)+A)\xi,\I_m\xi}\\
         = \Qm{\pt_t M_0\xi,\I\xi}+\Qm{((\rho M_0+M_1)+A)\I_m\xi,\I_m\xi}
    \end{multline*}
    where we have used \eqref{eq:QI}. With $A=-A^*$ and 
    $\rho M_0+M_1\geq\gamma$ we get
    \[
      \Qm{(\rho M_0+M_1)\I_m\xi,\I_m\xi}+\Qm{A\I_m\xi,\I_m\xi}
        \geq \gamma \norm{\I_m\xi}{L^2(I_m)}^2.
    \]
    The term involving the time derivative can be rewritten using the equivalence of the 
    quadrature rule and the scalar product, and integration by parts as
    \[
      \Qm{\pt_t M_0\xi,\I_m\xi}
        = \frac{1}{2}\left( \norm{M_0^{1/2}\jump{I\xi}_{m-1}}{H}^2 + \norm{M_0^{1/2}I_m\xi(t_m^-)}{H}^2 - \norm{M_0^{1/2}I_{m-1}\xi(t_{m-1}^-)}{H}^2 \right).
  \]
    Summing over $m$ we get
    \begin{multline*}
      \gamma\norm{\I\xi}{0}^2+\frac{1}{2}\sum_{m=1}^M\norm{M_0^{1/2}\jump{\I\xi}_{m-1}}H^2+\frac{1}{2}\norm{M_0^{1/2}\xi(T)}H^2\\
        \leq \sum_{m=1}^M\Qm{(\pt_t M_0+(\rho M_0+M_1)+A)\eta,\I_m\xi}. 
    \end{multline*}
    We estimate the right hand side as follows
    \begin{align*}
      \Qm{\pt_t M_0\eta_\tau,\I_m\xi}
        &= \Qm{\pt_t M_0(V-\J_m V),\I_m\xi}\\
        &= \Qm{\pt_t M_0(V-\K_m V),\I_m\xi}
         \leq \norm{\pt_t M_0(V-\K_m V)}{Q,m}\norm{\I_m\xi}{Q,m},\\
      \Qm{\pt_t M_0\eta_h,\I_m\xi}
        &= \Qm{\pt_t M_0(\J_m V-I\J_m V),\I_m\xi}\\
        &\leq \norm{\pt_t M_0\J_m V-I\pt_t M_0\J_m V)}{Q,m}\norm{\I_m\xi}{Q,m},\\
      \Qm{(\rho M_0+M_1)\eta,\I_m\xi}_0
        &\leq \norm{\rho M_0+M_1}{}(\norm{V-\J_m V}{Q,m}+\norm{\J_m V-I \J_m V}{Q,m})\norm{\I_m\xi}{Q,m}\\
      \Qm{A\eta,\I_m\xi}_0
        &\leq (\norm{AV-\J_m AV}{Q,m}+\norm{A(\J_m V-I \J_m V)}{Q,m})\norm{\I_m\xi}{Q,m}
    \end{align*}
    where we used 
    $\pt_t\J_m v(\tmi)=\pt_t\K_m v(\tmi)$.
    With $\norm{\I_m\xi}{Q,m}= \norm{\I_m\xi}{L^2(I_m)}$ and a Young inequality we obtain the result
    \begin{align*}
      \gamma\norm{\I\xi}{0}^2+&\sum_{m=1}^M\norm{M_0^{1/2}\jump{\I\xi}_{m-1}}H^2+\norm{M_0^{1/2}\xi(T)}H^2\\
         \leq C\big(& \norm{\pt_t M_0(V-\K V)}{Q}^2
                      +\norm{V-\J V}{Q}^2
                      +\norm{AV-\J AV}{Q}^2\\
                                     &+\norm{\pt_t M_0\J V-I\pt_t M_0\J V}{Q}^2
                                     +\norm{\J V-I \J V}{Q}^2
                                     +\norm{A(\J V-I\J V)}{Q}^2
                               \big).
    \end{align*}
    Together with $\norm{v}{Q}\leq CT^{1/2}\norm{v}{L^\infty([0,T])}$ and the optimal error estimates 
    for the interpolation operators in 
    space \eqref{eq:interspace} 
    and 
    time \eqref{eq:intertime} 
    we get 
    \begin{equation}\label{eq:pointbound}
      \gamma\norm{\I\xi}{0}^2+\sum_{m=1}^M\norm{M_0^{1/2}\jump{\I\xi}_{m-1}}H^2+\norm{M_0^{1/2}\xi(T)}H^2
        \leq C T(\tau^{2(q+2)}+h^{2k}).
    \end{equation}
    This bounds the interpolated discrete error. We get an estimate of the error itself by using \eqref{eq:J_jump_I},
    due to $\xi$ being continuous,
    and the mesh constraint $\tau_m/\tau_{m-1}\leq C$. Let us start with $m=1$. Then using an inverse inequality
    we have
    \[
      \norm{\xi}{L^2(I_1)}
        \leq \norm{\I_1\xi}{L^2(I_1)}+\norm{(\I_1\xi)(0^+)\theta_1}{L^2(I_1)}
        \leq \norm{\I_1\xi}{L^2(I_1)}+C_{inv}\norm{\I_1\xi}{L^2(I_1)}
    \]
    due to $\norm{\theta_1}{L^2(I_1)}\leq C\tau_1^{1/2}$. Similarly we obtain for $m\geq 2$
    \begin{align*}
      \norm{\xi}{L^2(I_m)}
        &\leq \norm{\I_m\xi}{L^2(I_m)}+\norm{(\I_m\xi)(t_{m-1}^+)\theta_m}{L^2(I_m)}+\norm{(\I_{m-1}\xi)(t_{m-1}^-)\theta_m}{L^2(I_m)}\\
        &\leq \norm{\I_m\xi}{L^2(I_m)}+C_{inv}\left(\norm{\I_m\xi}{L^2(I_m)}+\left( \frac{\tau_m}{\tau_{m-1}} \right)^{1/2}\norm{\I_{m-1}\xi}{L^2(I_{m-1})}\right).
    \end{align*}
    Thus, we have
    \begin{align*}
      \norm{\xi}{0}^2
        &\leq (1+C_{inv})^2\norm{\I_1\xi}{L^2(I_1)}^2+2\sum_{m=2}^M\left(\left( 1+C_{inv} \right)^2\norm{\I_m\xi}{L^2(I_m)}^2+C_{inv}^2\frac{\tau_m}{\tau_{m-1}}\norm{\I_{m-1}\xi}{L^2(I_{m-1})}^2 \right)\\
        &\leq C \norm{\I\xi}{0}^2.
    \end{align*}
    Together with the estimation of the interpolation error in time and space the proof is done.
  \end{proof}
  \begin{rem}\label{rem:U:post}
    For the original problem $U$, we obtain a post-processed approximation in $I_m$ as
    \[
      E_\rho\tilde V_h^\tau
        = E_\rho V_h^\tau - \jump{E_\rho V_h^\tau}_{m-1}^{x_0}\theta
    \]
    with an error estimate of
    \[
      \norm{E_\rho\tilde V^\tau_h-U}{0}\leq C\e^{\rho T}T^{1/2}(\tau^{q+2}+h^{k}).
    \]
  \end{rem}
  Tables~\ref{tab:prob1:post}
  \begin{table}[tbp]
    \caption{Convergence in $L^2$ of $V_h^\tau$ and $\tilde V_h^\tau$ to $V$, and $E_\rho \tilde V_h^\tau$ and $\tilde U_h^\tau$ to $U$ for Example 1 using various polynomial degrees and $\rho=2$\label{tab:prob1:post}}
    \begin{center}
      \begin{tabular}{cc|cccccccc}
        $k/q$ & $M=N$ & \multicolumn{2}{c}{$\norm{V-V_h^\tau}{0}$} 
                      & \multicolumn{2}{c}{$\norm{V-\tilde V_h^\tau}{0}$} 
                      & \multicolumn{2}{c}{$\norm{U-E_\rho\tilde V_h^\tau}{0}$}
                      & \multicolumn{2}{c}{$\norm{U-\tilde U_h^\tau}{0}$}\\
        \hline
        \multirow{3}{*}{3/1} 
       & 192 & 1.521e-05 &      &  2.467e-08 &      & 4.854e-08 &      &  9.174e-09 &     \\
       & 384 & 3.803e-06 & 2.00 &  3.086e-09 & 3.00 & 6.065e-09 & 3.00 &  1.148e-09 & 3.00\\
       & 768 & 9.508e-07 & 2.00 &  3.859e-10 & 3.00 & 7.581e-10 & 3.00 &  1.436e-10 & 3.00\\  
        \hline
        \multirow{3}{*}{4/2}                                             
       & 192 & 1.316e-08 &      &  6.342e-11 &      & 2.475e-10 &      &  6.149e-11 &     \\
       & 384 & 1.645e-09 & 3.00 &  3.964e-12 & 4.00 & 1.547e-11 & 4.00 &  3.851e-12 & 4.00\\
       & 768 & 2.056e-10 & 3.00 &  2.621e-13 & 3.92 & 1.022e-12 & 3.92 &  2.543e-13 & 3.92\\
      \end{tabular}
    \end{center}
  \end{table}
  and~\ref{tab:prob2:post}
  \begin{table}[tbp]
    \caption{Convergence in $L^2$ of $V_h^\tau$ and $\tilde V_h^\tau$ to $V$, and $E_\rho \tilde V_h^\tau$ and $\tilde U_h^\tau$ to $U$ for Example 2 using various polynomial degrees and $\rho=1$\label{tab:prob2:post}}
    \begin{center}
      \begin{tabular}{cc|cccccccc}
        $k/q$ & $M=N$ & \multicolumn{2}{c}{$\norm{V-V_h^\tau}{0}$} 
                      & \multicolumn{2}{c}{$\norm{V-\tilde V_h^\tau}{0}$} 
                      & \multicolumn{2}{c}{$\norm{U-E_\rho\tilde V_h^\tau}{0}$}
                      & \multicolumn{2}{c}{$\norm{U-\tilde U_h^\tau}{0}$}\\
        \hline
        \multirow{3}{*}{3/1} 
      &  16 & 1.927e-04 &      &  4.865e-06 &      & 6.681e-06 &      & 3.764e-06 &     \\
      &  32 & 4.851e-05 & 1.99 &  6.109e-07 & 2.99 & 8.372e-07 & 3.00 & 4.724e-07 & 2.99\\
      &  64 & 1.217e-05 & 2.00 &  7.655e-08 & 3.00 & 1.049e-07 & 3.00 & 5.920e-08 & 3.00\\
        \hline                                                        
        \multirow{4}{*}{4/2}                                           
      &  16 & 1.940e-06 &      &  4.239e-08 &      & 8.760e-08 &      & 3.874e-08 &     \\
      &  32 & 2.433e-07 & 3.00 &  2.649e-09 & 4.00 & 5.472e-09 & 4.00 & 2.419e-09 & 4.00\\
      &  64 & 3.046e-08 & 3.00 &  2.195e-10 & 3.59 & 4.560e-10 & 3.58 & 1.740e-10 & 3.80\\
      \end{tabular}
    \end{center}
  \end{table}
  show the results of this post-processing procedure. Here we choose the polynomial degrees $q=k-2\geq 1$ in order to balance the 
  two error components. It is clear that the post-processed solutions $\tilde V_h^\tau$ and $E_\rho\tilde V_h^\tau$
  converge to $V$ and $U$, respectively with an improved order of $q+2=k$.
  Note that for $q=0$ the interpolation operator $\K$ in \eqref{eq:inter:K} is not defined and thus the 
  analysis presented in Theorem~\ref{thm:post} does not work. Furthermore, the numerical results do also not show 
  a better convergence rate for the post-processed solutions.
  
  In the last columns we also give numerical results for
  \[
    \tilde U_h^\tau:=U_h^\tau-\jump{U_h^\tau}_{m-1}^{x_0}\theta_\rho,
  \]
  which is the post-processing procedure applied to the solution $U_h^\tau$ of \eqref{eq:dG} using $\theta_\rho$ defined by the
  $\rho$-dependent interpolation points similar to $\theta$. 
  We also observe numerically an improved convergence order of $q+2$, but we do not have a theoretical basis for this result.

  As a final result we provide an $L^\infty$-error bound for the dG-C0 method~\eqref{eq:dG-C0} and thus for the post-processed solution.
  For this purpose, let us denote
  \[
    \norm{\cdot}{M_0,\infty,m}^2:=\sup_{t\in I_m}\norm{M_0^{1/2}(\cdot)(t)}{H}^2
    \qmbox{and}
    \norm{\cdot}{M_0,\infty}^2:=\sup_{t\in[0,T]}\norm{M_0^{1/2}(\cdot)(t)}{H}^2.
  \]
  We follow the analysis of \cite{AM04}, see also \cite{VR13,FrTW16}.
  \begin{lem}\label{lem:AK04}
    Let $p\in\PS_{q+1}([0,1])$ and $\tilde p\in\P_q([0,1])$, where $\tilde p=\I(pm^{-1})$ with the Gauß-Radau
    interpolation operator $\I$ on $[0,1]$ and $m:=t\mapsto t$. Then it holds
    \[
      \scp[]{p',\tilde p}
      =\frac{1}{2}\Qm[]{\tilde p,\tilde p}+\frac{1}{2}p(1)^2-p(0)\tilde p(0),
    \]
    where $\Qm[]{\cdot}$ is the Gauß-Radau quadrature rule on $[0,1]$.
  \end{lem}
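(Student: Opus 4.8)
The plan is to exploit that the Gauß-Radau rule with its $q+1$ nodes $t_0<\dots<t_q=1$ on $[0,1]$ is exact on $\P_{2q}$, while the left endpoint $0$ is \emph{not} a node. First I would record two exact representations of $\langle p',\tilde p\rangle=\int_0^1 p'\tilde p\dt$. Since $p'\tilde p\in\P_{2q}$, quadrature exactness gives directly $\int_0^1 p'\tilde p\dt=\sum_{i=0}^q\omega_i p'(t_i)\tilde p(t_i)$. Integrating by parts and using $p\tilde p'\in\P_{2q}$ yields the second representation $\int_0^1 p'\tilde p\dt=p(1)\tilde p(1)-p(0)\tilde p(0)-\sum_{i=0}^q\omega_i p(t_i)\tilde p'(t_i)$. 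The interpolation property of $\tilde p=\I(p\,m^{-1})$ gives $p(t_i)=t_i\tilde p(t_i)$ at every node, which is legitimate precisely because no node equals $0$, and at $t_q=1$ it specializes to $\tilde p(1)=p(1)$.

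The key device is the nodal polynomial $\omega(t)=\prod_{i=0}^q(t-t_i)$. Both $m\tilde p$ and $p$ lie in $\P_{q+1}$ and agree at the $q+1$ nodes, so $m\tilde p-p=c\,\omega$ for a scalar $c$; evaluating at $0$ fixes $c\,\omega(0)=-p(0)$. Differentiating $p=m\tilde p-c\,\omega$ gives $p'(t_i)=\tilde p(t_i)+t_i\tilde p'(t_i)-c\,\omega'(t_i)$ at each node. Substituting this into the first representation, and $p(t_i)=t_i\tilde p(t_i)$ into the second, I would then \emph{add} the two representations: the mixed terms $\sum_i\omega_i t_i\tilde p'(t_i)\tilde p(t_i)$ cancel, leaving $2\langle p',\tilde p\rangle=\sum_i\omega_i\tilde p(t_i)^2+p(1)\tilde p(1)-p(0)\tilde p(0)-c\sum_i\omega_i\omega'(t_i)\tilde p(t_i)$, where the first sum is exactly $Q[\tilde p,\tilde p]$.

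What remains is to show that the leftover quantity $-c\sum_i\omega_i\omega'(t_i)\tilde p(t_i)$ supplies precisely the second copy of $-p(0)\tilde p(0)$. Since $\omega'\tilde p\in\P_{2q}$, the sum equals $\int_0^1\omega'\tilde p\dt$; integrating by parts and using $\omega(1)=0$ together with $\int_0^1\omega\tilde p'\dt=\sum_i\omega_i\omega(t_i)\tilde p'(t_i)=0$ (all node values of $\omega$ vanish), this integral collapses to $-\omega(0)\tilde p(0)$. Hence $-c\sum_i\omega_i\omega'(t_i)\tilde p(t_i)=c\,\omega(0)\tilde p(0)=-p(0)\tilde p(0)$. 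Combined with $p(1)\tilde p(1)=p(1)^2$, I obtain $2\langle p',\tilde p\rangle=Q[\tilde p,\tilde p]+p(1)^2-2p(0)\tilde p(0)$, which is the asserted identity after dividing by two.

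I expect the main obstacle to be the careful treatment of the $\omega'$-term. One must notice that the decomposition $m\tilde p-p=c\,\omega$ introduces a genuine boundary contribution at the non-node $0$, and that exactly one half of $-p(0)\tilde p(0)$ is hidden there while the other half comes from the integration-by-parts boundary term $-p(0)\tilde p(0)$ of the second representation. The cancellation of the mixed derivative terms upon averaging the two representations, and the repeated use of the degree count $2q$ to invoke quadrature exactness, are the bookkeeping steps that make everything close cleanly.
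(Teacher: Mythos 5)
Your proof is correct, and it supplies details the paper deliberately omits: the paper does not prove Lemma~\ref{lem:AK04} at all, but instead refers to \cite[Lemma 2.1]{AM04} with the remark that the argument ``exploits the interaction of the Gauß-Radau interpolation and quadrature'' --- which is exactly the mechanism you spell out. Your two exact representations of $\scp[]{p',\tilde p}$ are legitimate (both $p'\tilde p$ and $p\tilde p'$ lie in $\PS_{2q}$, within the exactness range of the $(q+1)$-point right-sided rule), the decomposition $m\tilde p-p=c\,\omega$ with $c\,\omega(0)=-p(0)$ is valid because $m\tilde p-p\in\PS_{q+1}$ vanishes at all $q+1$ nodes and $\omega(0)=\prod_i(-t_i)\neq0$, and the crucial bookkeeping of the $\omega'$-term checks out: since $\omega'\tilde p\in\PS_{2q}$ and $\omega(1)=0$ (the right endpoint is a node) while $\sum_i\omega_i\,\omega(t_i)\tilde p'(t_i)=0$, the residual sum collapses to $c\,\omega(0)\tilde p(0)=-p(0)\tilde p(0)$, producing the second half of the boundary term as you claim. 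The cancellation of the mixed terms $\sum_i\omega_i t_i\tilde p(t_i)\tilde p'(t_i)$ upon adding the two representations, and the identification $\tilde p(1)=p(1)$ at the node $t_q=1$, complete the identity. In short: a clean, self-contained verification that is faithful to the strategy of the cited reference, and it correctly isolates the one genuinely delicate point, namely that $0$ is \emph{not} a quadrature node, so the relation $p(t_i)=t_i\tilde p(t_i)$ is available at every node while the contribution at $0$ must enter through $\omega(0)$.
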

  \begin{proof}
    The proof follows the same lines as \cite[Lemma 2.1]{AM04}, exploiting the interaction of the Gauß-Radau interpolation and quadrature.
    We will therefore skip it.
  \end{proof}
  As a consequence of Lemma~\ref{lem:AK04} we have
  \begin{align}
    \scp[]{p',\tilde p}
      &\geq \frac{1}{2}\Qm[]{\tilde p,\tilde p}-p(0)\tilde p(0)
       \geq \frac{1}{2}\Qm[]{p,p}-p(0)\tilde p(0)\notag\\
      &= \frac{1}{2}\Qm[]{\I p,\I p}-p(0)\tilde p(0)
       = \frac{1}{2}\norm{\I p}{L^2([0,1])}^2-p(0)\tilde p(0).\label{eq:AK04}
  \end{align}

  \begin{thm}
    Under the conditions of Theorem~\ref{thm:post} we have
    \[
      \norm{V-\tilde V_h^\tau}{M_0,\infty}
        \leq CT^{1/2}(\tau^{q+2}+h^{k}).
    \]
    Furthermore, we also have
    \[
      \norm{U-E_\rho\tilde V_h^\tau}{M_0,\infty}
        \leq C\e^{\rho T}T^{1/2}(\tau^{q+2}+h^{k}).
    \]
  \end{thm}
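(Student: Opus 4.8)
The plan is to reuse the splitting from the proof of Theorem~\ref{thm:post}, $V-\tilde V^\tau_h=\eta-\xi$ with $\eta:=V-I\J V$ the combined space--time interpolation error and $\xi:=\tilde V^\tau_h-\J I V\in\V$ continuous and piecewise polynomial of degree $q+1$ in time. Since $M_0$ is bounded and the interpolation estimates \eqref{eq:intertime}, \eqref{eq:interspace} are already pointwise in time, $\norm{\eta}{M_0,\infty}$ is at once of the asserted size, and the entire task is to bound $\max_m\sup_{t\in I_m}\norm{M_0^{1/2}\xi(t)}{H}$. The difficulty, and the reason for Lemma~\ref{lem:AK04}, is to achieve this without losing half an order: a crude inverse estimate against the global $L^2$-bound $\norm{\I\xi}{0}$ of \eqref{eq:pointbound} would produce a spurious factor $\tau^{-1/2}$.

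First I would note that the nodal values of $\xi$ are already controlled uniformly. Summing the per-interval energy identity for $\Qm{\pt_t M_0\xi,\I_m\xi}$ from the proof of Theorem~\ref{thm:post} only up to an arbitrary index $N$ telescopes to $\tfrac12\norm{M_0^{1/2}\xi(t_N)}{H}^2$ (the initial term being controlled), while the right-hand side stays a sub-sum of the residuals already bounded there. Hence $\max_N\norm{M_0^{1/2}\xi(t_N)}{H}\le CT^{1/2}(\tau^{q+2}+h^{k})$, and likewise every jump $\norm{M_0^{1/2}\jump{\I\xi}_{m-1}}{H}$ is controlled by \eqref{eq:pointbound}.

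The crux is a \emph{sharp} per-interval bound $\norm{M_0^{1/2}\I_m\xi}{L^2(I_m)}^2\le C\tau_m\,T(\tau^{2(q+2)}+h^{2k})$, carrying the extra factor $\tau_m$. Here I test the error equation \eqref{eq:erroreq} on a single $I_m$ with $\Phi=\tilde\xi_m$, the degree-$q$ function that on the reference interval is the Gauß--Radau interpolant of $\xi$ divided by the scaled time variable, exactly as in Lemma~\ref{lem:AK04}. Two features make this choice decisive. At the quadrature points $\tilde\xi_m$ equals $\xi$ divided by the time variable pointwise, so the Gauß--Radau sum of the skew term vanishes termwise, $\Qm{A\xi,\tilde\xi_m}=0$ by $A=-A^*$, while the symmetric term obeys $\Qm{(\rho M_0+M_1)\xi,\tilde\xi_m}\ge\gamma\norm{\I_m\xi}{L^2(I_m)}^2$ in the \emph{full} $H$-norm, which is what permits absorbing the $A\eta$-contribution on the right. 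The time-derivative term is integrated exactly and, via the consequence \eqref{eq:AK04} rescaled to $I_m$, is bounded below by $\tfrac1{2\tau_m}\norm{M_0^{1/2}\I_m\xi}{L^2(I_m)}^2+\tfrac12\norm{M_0^{1/2}\xi(t_m)}{H}^2$ minus the incoming boundary contribution $\scp{M_0\xi(t_{m-1}),\tilde\xi_m(t_{m-1})}$. The coefficient $\tfrac1{2\tau_m}$ is the whole point: bounding $\norm{M_0^{1/2}\tilde\xi_m(t_{m-1})}{H}\le C\tau_m^{-1/2}\norm{M_0^{1/2}\I_m\xi}{L^2(I_m)}$ (point evaluation of a low-degree polynomial) and using Young's inequality absorbs the incoming term and leaves only the already-controlled $C\norm{M_0^{1/2}\xi(t_{m-1})}{H}^2$; multiplying through by $\tau_m$ then yields the $\tau_m$-scaled bound, the $\eta$-residuals being treated by \eqref{eq:discr:CSU} and the local interpolation estimates.

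Finally I would recover the supremum. On $I_m$ the representation \eqref{eq:J_jump_I} gives $\xi=\I_m\xi-\jump{\I\xi}_{m-1}\theta_m$, so an inverse inequality on the degree-$q$ part now yields $\norm{M_0^{1/2}\I_m\xi}{L^\infty(I_m)}\le C\tau_m^{-1/2}\norm{M_0^{1/2}\I_m\xi}{L^2(I_m)}\le CT^{1/2}(\tau^{q+2}+h^{k})$ \emph{with the correct order}, while the jump term is bounded by \eqref{eq:pointbound} and $\norm{\theta_m}{L^\infty(I_m)}=\ord{1}$. Taking the maximum over $m$ and adding the $\eta$-estimate proves the bound for $V$; the bound for $U$ follows verbatim from $U=\e^{\rho t}V$ and $E_\rho\tilde V^\tau_h=\e^{\rho t}\tilde V^\tau_h$, which costs at most the factor $\e^{\rho T}$, as in Remark~\ref{rem:U:post}. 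I expect the genuine obstacle to be exactly the sharp local $L^2$-bound: everything hinges on the $\tfrac1{\tau_m}$-weighted coercive term delivered by Lemma~\ref{lem:AK04} together with the termwise vanishing of the skew part under the Gauß--Radau rule.
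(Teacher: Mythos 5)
Your proposal is correct and follows essentially the same route as the paper: the same splitting into $\xi$ and $\eta$, the same weighted test function (the Gau\ss--Radau interpolant of $\xi$ divided by the shifted time variable, i.e.\ the paper's $\I_m\phi$ from Lemma~\ref{lem:AK04} and \eqref{eq:AK04}), the same termwise vanishing of the skew term under the quadrature, the same nodal-value control via partial sums of the energy identity from Theorem~\ref{thm:post}, and the same recovery of $\xi$ from $\I\xi$ via \eqref{eq:J_jump_I}. The only deviations are cosmetic: you keep the symmetric term on the left via coercivity and absorb the residuals by Young's inequality, where the paper moves $\Qm{(\rho M_0+M_1)\xi,2\I_m\phi}$ to the right and bounds it by the already established $L^2$ estimate, and you apply the inverse inequality at the end rather than at the start.
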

  \begin{proof}
    Despite its similarity with the proof of \cite[Theorem 3.12]{FrTW16}, we provide it here.
    The biggest change is, that we provide an estimate for $\I\xi$ first.
    
    Let $p=\sqrt{M_0}\xi\in\V$, $\phi:=t\mapsto \frac{\tau_m}{t-t_{m-1}}\xi(t)$ and $\tilde p=\sqrt{M_0}\I\phi\in\U$.
    Then \eqref{eq:AK04} after rescaling to $I_m$ gives 
    \[
      \scp[m]{\pt_t M_0\xi,2\I_m\phi}
        \geq \frac{1}{\tau_m}\scp[m]{M_0\I_m\xi,\I_m\xi}-2\scp{M_0\xi(t_{m-1}), \I_m\phi(t_{m-1})}.
    \]
    With an inverse inequality we obtain
    \begin{align*}
      \norm{\I_m\xi}{M_0,\infty,m}^2
        &=\sup_{t\in I_m}\scp{M_0\I_m\xi,\I_m\xi}
         \leq \frac{C}{\tau_m}\scp[m]{M_0\I_m\xi,\I_m\xi}\\
        &\leq C(\scp[m]{\pt_t M_0\xi,2\I_m\phi}+2\scp{M_0\xi(t_{m-1}), \I_m\phi(t_{m-1})}).
    \end{align*}
    To use the error equation~\eqref{eq:erroreq}, we change the right-hand side to the full operator 
    of the differential equation. With
    \begin{align*}
      \scp[m]{\pt_t M_0\xi,2\I_m\phi}
        &= \Qm{\pt_t M_0\xi,2\I_m\phi},\\
      \Qm{A\xi,2\I_m\phi}
        &= \frac{\tau_m}{2}\sum_{i=0}^qw_i^m\frac{2\tau_m}{(\tmi-t_{m-1})\tmi}\scpr{A\xi(\tmi),\xi(\tmi)}=0
    \end{align*}
    it then follows
    \begin{align*}
      \norm{\I_m\xi}{M_0,\infty,m}^2
        &\leq \Qm{(\pt_t M_0+(\rho M_0+M_1)+A)\xi,2\I_m\phi} +2\scp{M_0\xi(t_{m-1}), \I_m\phi(t_{m-1})}\\&\hspace*{2cm}-\Qm{(\rho M_0+M_1)\xi,2\I_m\phi}\\
        &\leq \Qm{(\pt_t M_0+(\rho M_0+M_1)+A)\eta,2\I_m\phi}+2\scp{M_0\xi(t_{m-1}), \I_m\phi(t_{m-1})}\\&\hspace*{2cm}-\Qm{(\rho M_0+M_1)\xi,2\I_m\phi}.
    \end{align*}
    If we apply the discrete Cauchy-Schwarz inequality \eqref{eq:discr:CSU} to the quadrature terms, we obtain the factors $\Qm{\I_m\phi,\I_m\phi}$.
    For these, by definition of $\I_m$ and $\phi$, we have
    \[
      \Qm{\I_m\phi,\I_m\phi}
        =\Qm{\phi,\phi}
        \leq\frac{1}{c^2}\Qm{\xi,\xi}
        = \frac{1}{c^2}\Qm{\I_m\xi,\I_m\xi},
    \]
    where $c$ depends on $\frac{\tmi-t_{m-1}}{\tau_m}$, but is bounded from below, see \cite{Waurick16}.
    Now we can use the results of Theorem~\ref{thm:post} and its proof to first bound the quadrature terms:
    \begin{align*}
      \Qm{(\pt_t M_0+(\rho M_0+M_1)+A)\xi,2\I_m\phi}
        &\leq CT^{1/2}(\tau^{q+2}+h^k)\Qm{\I_m\phi,\I_m\phi}^{1/2}\\
        &\leq CT(\tau^{2(q+2)}+h^{2k}),\\
      \Qm{(\rho M_0+M_1)\xi,2\I_m\phi}
        &\leq C\norm{\rho M_0+M_1}{}\Qm{\xi,\xi}^{1/2}\Qm{\I_m\phi,\I_m\phi}^{1/2}\\
        &\leq C T(\tau^{2(q+2)}+h^{2k}).
    \end{align*}
    Finally, with a Young's inequality and the estimate of Theorem~\ref{thm:post} at the discrete times $t_{m-1}$ (shown similarly to the estimate at $T$ in \eqref{eq:pointbound})
    \begin{align*}
      \scp{M_0\xi(t_{m-1}), \I_m\phi(t_{m-1})}
        &\leq \frac{1}{c}\norm{M_0^{1/2}\xi(t_{m-1})}{H}\sup_{t\in I_m}\norm{M_0^{1/2}\I_m\xi(t)}{H}\\
        &\leq \frac{1}{c^2}T(\tau^{2(q+2)}+h^{2k})+\frac{1}{4}\norm{\I_m\xi}{M_0,\infty,m}^2.
    \end{align*}
    Combining these estimates it follows
    \[
      \norm{\I_m\xi}{M_0,\infty,m}^2
        \leq C T(\tau^{2(q+2)}+h^{2k}).
    \]
    With $\xi=\I_m\xi-\jump{\I\xi}_{m-1}^0\theta_m$ on $I_m$ we obtain
    \begin{align*}
      \norm{\xi}{M_0,\infty,m}^2
      &\leq 2\bigg( \norm{\I_m\xi}{M_0,\infty,m}^2+2\norm{\theta_m}{L^\infty(I_m)}^2\left(\norm{\I_m\xi}{M_0,\infty,m}^2+\norm{\I_{m-1}\xi}{M_0,\infty,m-1}^2\right) \bigg).
    \end{align*}
    It then follows
    \[
      \norm{\xi}{M_0,\infty}^2
        \leq 2(1+2\norm{\theta}{L^{\infty}}^2)\norm{\I\xi}{M_0,\infty}^2
        \leq CT(\tau^{2(q+2)}+h^{2k}).
    \]
    Together with the interpolation error estimates, this completes the proof of the first part.
    The second result simply follows as in Remark~\ref{rem:U:post}. 
  \end{proof}

  Tables~\ref{tab:prob1:post:inf}
  \begin{table}[tbp]
    \caption{Convergence in $L^\infty$ of $V_h^\tau$ and $\tilde V_h^\tau$ to $V$, and $E_\rho \tilde V_h^\tau$ and $\tilde U_h^\tau$ to $U$ for Example 1 using various polynomial degrees and $\rho=2$\label{tab:prob1:post:inf}}
    \begin{center}
      \begin{tabular}{cc|cccccccc}
        $k/q$ & $M=N$ & \multicolumn{2}{c}{$\norm{V-V_h^\tau}{M_0,\infty}$} 
                      & \multicolumn{2}{c}{$\norm{V-\tilde V_h^\tau}{M_0,\infty}$} 
                      & \multicolumn{2}{c}{$\norm{U-E_\rho\tilde V_h^\tau}{M_0,\infty}$}
                      & \multicolumn{2}{c}{$\norm{U-\tilde U_h^\tau}{M_0,\infty}$}\\
        \hline
        \multirow{3}{*}{3/1} 
      &  192 & 2.999e-05 &      &  5.830e-08 &      &  5.830e-08 &       & 1.002e-07 &     \\
      &  384 & 7.530e-06 & 1.99 &  7.326e-09 & 2.99 &  7.326e-09 & 2.99  & 1.253e-08 & 3.00\\
      &  768 & 1.887e-06 & 2.00 &  9.183e-10 & 3.00 &  9.183e-10 & 3.00  & 1.567e-09 & 3.00\\  
        \hline
        \multirow{3}{*}{4/2}   
      &  192 & 3.979e-08 &      &  6.713e-11 &      &  6.713e-11 &      &  4.570e-10 &     \\
      &  384 & 4.992e-09 & 2.99 &  4.196e-12 & 4.00 &  4.196e-12 & 4.00 &  2.858e-11 & 4.00\\
      &  768 & 6.252e-10 & 3.00 &  2.674e-13 & 3.97 &  2.674e-13 & 3.97 &  1.838e-12 & 3.96
      \end{tabular}
    \end{center}
  \end{table}
  and~\ref{tab:prob2:post:inf}
  \begin{table}[tbp]
    \caption{Convergence in $L^\infty$ of $V_h^\tau$ and $\tilde V_h^\tau$ to $V$, and $E_\rho \tilde V_h^\tau$ and $\tilde U_h^\tau$ to $U$ for Example 2 using various polynomial degrees and $\rho=1$\label{tab:prob2:post:inf}}
    \begin{center}
      \begin{tabular}{cc|cccccccc}
        $k/q$ & $M=N$ & \multicolumn{2}{c}{$\norm{V-V_h^\tau}{M_0,\infty}$} 
                      & \multicolumn{2}{c}{$\norm{V-\tilde V_h^\tau}{M_0,\infty}$} 
                      & \multicolumn{2}{c}{$\norm{U-E_\rho\tilde V_h^\tau}{M_0,\infty}$}
                      & \multicolumn{2}{c}{$\norm{U-\tilde U_h^\tau}{M_0,\infty}$}\\
        \hline
        \multirow{3}{*}{3/1} 
     & 16 & 7.075e-04 &      &  1.177e-05 &       & 1.177e-05 &      &  1.109e-05 &     \\
     & 32 & 1.834e-04 & 1.95 &  1.525e-06 & 2.95  & 1.525e-06 & 2.95 &  1.391e-06 & 3.00\\
     & 64 & 4.671e-05 & 1.97 &  1.940e-07 & 2.97  & 1.940e-07 & 2.97 &  1.742e-07 & 3.00\\
        \hline
        \multirow{3}{*}{4/2}                                             
      & 16 & 8.141e-06 &      &  4.885e-08 &      &  4.885e-08 &      &  1.395e-07 &     \\
      & 32 & 1.046e-06 & 2.96 &  3.075e-09 & 3.99 &  3.075e-09 & 3.99 &  8.791e-09 & 3.99\\
      & 64 & 1.325e-07 & 2.98 &  1.952e-10 & 3.98 &  1.952e-10 & 3.98 &  5.532e-10 & 3.99
      \end{tabular}
    \end{center}
  \end{table}
  show the pointwise convergence results for our two examples. Again, the theoretical results are supported by our experiments, 
  and the order $q+2$ is achieved for the post-processed solution.
  
\section*{Acknowledgements}
  The author would like to thank Simon Becher and Gunar Matthies for the helpful discussions on
  the post-processing of dG-methods.
  
  \bibliographystyle{plain}
  \bibliography{lit}

\begin{thebibliography}{10}

\bibitem{AM04}
G.~Akrivis and C.~Makridakis.
\newblock Galerkin time-stepping methods for nonlinear parabolic equations.
\newblock {\em ESAIM: M2AN}, 38:261--289, 3 2004.

\bibitem{Apel99}
T.~Apel.
\newblock {\em Anisotropic finite elements: local estimates and applications}.
\newblock Advances in Numerical Mathematics. B. G. Teubner, Stuttgart, 1999.

\bibitem{Becher22}
S.~Becher.
\newblock {\em On a Family of Variational Time Discretization Methods}.
\newblock PhD thesis, TU Dresden, 2022.
\newblock {h}ttps://nbn-resolving.org/urn:nbn:de:bsz:14-qucosa2-806096.

\bibitem{BM21}
S.~Becher and G.~Matthies.
\newblock Variational time discretizations of higher order and higher
  regularity.
\newblock {\em BIT Numerical Mathematics}, 61(3):721--755, 2021.

\bibitem{BM22}
S.~Becher and G.~Matthies.
\newblock Unified analysis for variational time discretizations of higher order
  and higher regularity applied to non-stiff odes.
\newblock {\em Numerical Algorithms}, 89(4):1533--1565, 2022.

\bibitem{BH71}
J.~H. Bramble and S.~R. Hilbert.
\newblock Bounds for a class of linear functionals with applications to hermite
  interpolation.
\newblock {\em Numerische Mathematik}, 16:362--369, 1971.

\bibitem{EG21a}
A.~Ern and J.-L. Guermond.
\newblock {\em Finite elements {I}---{A}pproximation and interpolation},
  volume~72 of {\em Texts in Applied Mathematics}.
\newblock Springer, Cham, [2021] \copyright 2021.

\bibitem{ES16}
A.~Ern and F.~Schieweck.
\newblock Discontinuous {G}alerkin method in time combined with a stabilized
  finite element method in space for linear first-order {PDE}s.
\newblock {\em Math. Comp.}, 85(301):2099--2129, 2016.

\bibitem{Fr19}
S.~Franz.
\newblock Continuous time integration for changing time systems.
\newblock {\em ETNA}, 54:198--209, 2021.

\bibitem{FrTW16}
S.~Franz, S.~Trostorff, and M.~Waurick.
\newblock Numerical methods for changing type systems.
\newblock {\em IMAJNA}, 39(2):1009--1038, 2019.

\bibitem{MS11}
G.~Matthies and F.~Schieweck.
\newblock Higher order variational time discretizations for nonlinear systems
  of ordinary differential equations.
\newblock Preprint 23/2011, Fakultät für Mathematik,
  Otto-von-Guericke-Universität Magdeburg, 2011.

\bibitem{McGPTW20}
D.~McGhee, R.~Picard, S.~Trostorff, and M.~Waurick.
\newblock {\em A Primer for a Secret Shortcut to PDEs of Mathematical Physics}.
\newblock Springer International Publishing, 1st edition, 2020.

\bibitem{Picard}
R.~Picard.
\newblock {A structural observation for linear material laws in classical
  mathematical physics}.
\newblock {\em Math. Methods Appl. Sci.}, 32(14):1768--1803, 2009.

\bibitem{PTVF07}
W.~H. Press, S.~A. Teukolsky, W.~T. Vetterling, and B.~P. Flannery.
\newblock {\em Numerical Recipes 3rd Edition: The Art of Scientific Computing}.
\newblock Cambridge University Press, New York, NY, USA, 3 edition, 2007.

\bibitem{SZ90}
L.~R. Scott and S.~Zhang.
\newblock {Finite element interpolation of nonsmooth functions satisfying
  boundary conditions}.
\newblock {\em Math. Comp.}, (54):483--493, 1990.

\bibitem{Thomee06}
V.~Thom\'{e}e.
\newblock {\em Galerkin finite element methods for parabolic problems},
  volume~25 of {\em Springer Series in Computational Mathematics}.
\newblock Springer-Verlag, Berlin, second edition, 2006.

\bibitem{Waurick16}
S.~Trostorff and M.~{Waurick}.
\newblock {On the weighted Gauss-Radau Quadrature}.
\newblock arXiv:1610.09016, 2016.

\bibitem{VR13}
M.~Vlasak and H.-G. Roos.
\newblock An optimal uniform a priori error estimate for an unsteady singularly
  perturbed problem.
\newblock {\em Int. J. Numer. Anal. Model.}, 11(1):24--33, 2014.

\end{thebibliography}
\end{document}